\numberwithin{equation}{section}
\newtheorem{definition}{\hspace{2em}Definition}[section]
\newtheorem{theorem}[definition]{\hspace{2em}Theorem}
\newtheorem{lemma}[definition]{\hspace{2em}Lemma}
\newtheorem{remark}{\hspace{2em}Remark}[section]
\title{Well-posedness of 1-D compressible Euler-Poisson equations with physical vacuum}
\author{Xumin Gu\footnote{School of Mathematical Sciences, Fudan University, Shanghai 200433, P.R.China. {\it Email:082018045@fudan.edu.cn}}\ \  and Zhen Lei\footnote{School of Mathematical Sciences; LMNS and Shanghai Key Laboratory for Contemporary Applied Mathematics, Fudan University, Shanghai 200433, P. R.China. {\it Email: leizhn@yahoo.com, zlei@fudan.edu.cn}}}
\date{}
\begin{document}
\maketitle

\begin{abstract}
This paper is concerned with the 1-D compressible Euler-Poisson equations with moving physical vacuum boundary condition. It is usually used to describe the motion of a self-gravitating inviscid gaseous star. The local well-posedness of classical solutions is established in the case of the adiabatic index $1<\gamma<3$.
\end{abstract}

\section{Introduction}
The motion of self-gravitating inviscid gaseous stars in the universe can be described by following free boundary problem for the compressible Euler equations coupled with Poisson equation:
\begin{align}
  \label{euler-poisson-3d}
  \rho_t + \nabla \cdot (\rho u) &= 0&\text{in}\ \ \Omega(t),\\
  \rho [u_t + u \cdot \nabla u] + \nabla P &= \rho \nabla \phi &\text{in}\ \  \Omega(t),\\
  - \Delta\phi &= 4 \pi  \rho g&\text{on}\ \  \Omega(t),\\
  \nu (\Gamma(t)) &= u,\\
  (\rho, u) &= (\rho_0, u_0) &\text{on}\ \  \Omega(0).\label{5353}
\end{align}
The open, bounded domain $\Omega(t) \subset \mathbb{R}^3$ denotes the changing domain occupied by the gas. $\Gamma(t) := \partial \Omega(t)$ denotes the moving vacuum boundary, $\nu (\Gamma(t))$ denotes the velocity of $\Gamma(t)$. The density $\rho > 0$ in $\Omega(t)$ and $\rho = 0$ in $\mathbb{R}^3\setminus \Omega(t)$. $u$ denotes the Eulerian velocity field. $p$ denotes the pressure function, and $\rho$ denotes the density of the gas. $\phi$ is the potential function of the self-gravitational force, and g is the gravitational constant. The equation of state for a polytropic gas is given by:
\begin{equation}
P= C_{\gamma}\rho^{\gamma} \ \ \ \text{for}\ \ \gamma > 1, 
\label{poly}
\end{equation}
where $C_{\gamma}$ is the adiabatic constant which we set to be one. For more details of the related background, see for instance, \cite{C}.

With the sound speed given by $c^2:=\sqrt{\partial P / \partial \rho}$, and with $c_0=c(\cdot,0)$, the condition
\begin{equation}
 - \infty < \dfrac{\partial c_0^2}{\partial N} <0 \ \ \text{on}\ \ \  \Gamma
 \label{vacuum}
\end{equation}
defines a ``physical vacuum'' boundary, where $N$ denoting the outward unit normal to the initial boundary $\Gamma := \partial \Omega(0)$. This definition of physical vacuum was motivated by the case
of Euler equations with damping studied in \cite{L_1996}\cite{LT_2000} and the physical
vacuum behavior can be realized by some self-similar solutions and stationary
solutions for different physical systems such as Euler equations with damping. For more
details and the physical background of this concept, please see \cite{L_1996}\cite{LT_1997}\cite{TY_2006}.

The local existence theory of classical solutions featuring the physical vacuum boundary even for one-dimensional compressible Euler equations was only established recently. This is because if the physical vacuum boundary condition is assumed, Euler equations becomes a degenerate and characteristic hyperbolic system and the classical theory of hyperbolic systems can not be
directly applied. In \cite{MJ_2009},  Jang and Masmoudi consider the one-dimensional Euler equations in mass Lagrangian coordinates. Local existence was proved using a new structure lying upon the physical vacuum in the framework of free boundary problems. Independently of this work, in \cite{DS_2009}, Coutand and Shkoller constructed $H^2$-type solutions with moving boundary in Lagrangian coordinates based on Hardy inequalities and degenerate parabolic regularization.

In this paper, we will focus on the 1-D case for the system (\ref{euler-poisson-3d})--(\ref{5353}) with the physical vacuum condition:
\begin{align}
  \label{euler-poisson}
  \rho_t + (\rho u)_{\eta} &= 0 &\text{in}&\ \ I(t),\\
  \rho [u_t + u u_{\eta}] + (\rho^{\gamma})_{\eta} &= \rho \phi_{\eta}&\text{in}&\ \  I(t),\label{extra}\\
  - \phi_{\eta\eta} &= C \rho &\text{on}&\ \  I(t),\label{9090}\\
   (\rho, u) &= (\rho_0, u_0) &\text{on}&\ \  I(0),\\
  \nu (\Gamma(t)) &= u,\label{98}\\
0 < \bigg|\dfrac{\partial c_0^2}{\partial \eta}&\bigg| <+\infty \ \ &\text{on}&\ \ \  \Gamma,\label{1dvacuum}
\end{align}
where $I(0) = I = \{0 < \eta <1\}$ and $\Gamma:=\partial I$ and prove the local existence result for it (The 3D case will appear soon).

Our main result is the following theorem:
\begin{theorem}\label{theorem}(Local wellposedness) For $1<\gamma<3$, assume that initial data  $\rho_0 >0$ in $I$, $M_0 < \infty$ (defined in (\ref{M0000})), and the physical vacuum condition (\ref{1dvacuum}) holds. Then there exists a unique solution to (\ref{euler-poisson-lag})--(\ref{5r}) (and hence (\ref{euler-poisson})--(\ref{98})) on $[0,T]$ for some sufficiently small $T >0$  such that 
\begin{equation}
\sup_{t \in [0,T]} E(t) \leq 2M_0.
\end{equation}
\end{theorem}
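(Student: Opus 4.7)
The plan is to reduce the free-boundary problem to a problem on the fixed interval $I=(0,1)$ via Lagrangian coordinates, combine degenerate weighted energy estimates with a parabolic regularization, and then pass to a limit.

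First, I would introduce the Lagrangian flow $\eta(x,t)$ satisfying $\eta_t = u\circ\eta$ with $\eta(x,0)=x$, so that the continuity equation (\ref{euler-poisson}) integrates to the algebraic identity $\rho(\eta(x,t),t)\,\eta_x(x,t) = \rho_0(x)$. Substituting this into the momentum equation (\ref{extra}) produces a degenerate quasilinear wave-type equation for $\eta$ on the fixed domain, schematically
\[
\rho_0\, \eta_{tt} + \bigl(\rho_0^{\gamma}\eta_x^{-\gamma}\bigr)_x \;=\; \rho_0\, \Phi(x,t),
\]
which is presumably the content of the referenced system (\ref{euler-poisson-lag})--(\ref{5r}). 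The physical vacuum condition (\ref{1dvacuum}) translates to $\rho_0(x)\sim \mathrm{dist}(x,\partial I)$ near $\partial I$, so the coefficient of $\eta_{tt}$ vanishes on the boundary and the equation is characteristic and degenerate.

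Second, I would exploit the one-dimensional Poisson structure to tame the self-gravitation. Integrating (\ref{9090}) once and changing to Lagrangian variables via $y=\eta(z,t)$, $dy = \eta_x\,dz$, together with $\rho\eta_x = \rho_0$, gives
\[
\phi_\eta(\eta(x,t),t) \;=\; -C\!\int_0^x \rho_0(z)\,dz \;+\; C_1(t),
\]
a prescribed smooth function of the Lagrangian variable $x$ modulo a time-dependent constant. Thus, after transformation, the gravitational term acts as a lower-order forcing that is essentially frozen in $x$ and introduces no problematic derivatives. This is the key simplification that lets the machinery designed for compressible Euler with physical vacuum carry over to Euler-Poisson.

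Third, following the strategy developed by Coutand-Shkoller and Jang-Masmoudi, I would set up a high-order weighted energy $E(t)$ consisting of $L^2$ norms of mixed spacetime derivatives of $\eta$ with weights of the form $\rho_0^{\alpha}$ matched to the order of differentiation. Differentiating the Lagrangian equation in $t$ and $x$ and testing against suitably weighted derivatives, the principal part yields a coercive quadratic form after integration by parts, while commutators and the Poisson forcing generate only controlled lower-order terms. Hardy-type embeddings with weight $\rho_0$ are then used to dominate boundary traces and to convert spatial derivatives into weighted $L^2$ norms, producing an a priori bound $E(t)\leq M_0 + T\,P(E(t))$ on a short time interval. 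Existence is obtained by solving a non-degenerate parabolic regularization of the equation (for instance by adding $\varepsilon \eta_{txx}$) on the fixed interval via standard methods, extracting uniform bounds in $\varepsilon$, and passing to the limit; uniqueness follows from an analogous energy estimate applied to the difference of two solutions.

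The main obstacle is the closure of the weighted energy estimate: each differentiation of the pressure term $(\rho_0^\gamma \eta_x^{-\gamma})_x$ produces factors of $\rho_0^{\gamma-1}\rho_{0,x}$ and high powers of $\eta_x^{-1}$, and the weights appearing must match exactly the integrability afforded by the Hardy embeddings associated to the boundary behavior $\rho_0\sim\mathrm{dist}(\cdot,\partial I)$. The range $1<\gamma<3$ is precisely the window in which these exponents balance so that all boundary contributions can be absorbed while the interior terms remain controllable. The Poisson forcing, by contrast, is benign once the one-dimensional reduction above has been performed.
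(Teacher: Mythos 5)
Your overall strategy --- Lagrangian reduction via $\rho\,\eta_x=\rho_0$, an explicit antiderivative formula for the one-dimensional Poisson term, high-order weighted energies closed by Hardy-type embeddings, and a regularize-then-pass-to-the-limit existence scheme --- is exactly the route the paper takes. Two points, however, need correction.

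First, the regularization. The paper does not add a non-degenerate viscosity such as $\varepsilon\eta_{txx}$; it adds the \emph{degenerate} term $\kappa(\rho_0^2 v')'$. The weight $\rho_0^2$ is essential: since there is no boundary condition on $v$ itself (only $\rho_0=0$ on $\Gamma$), the boundary terms produced by integration by parts vanish only because of this weight, and the artificial viscosity then has the same weight structure as the pressure $(\rho_0^2/\eta_x^2)_x$, contributing the sign-definite quantity $\kappa\int_0^t\int_I\rho_0^2(\partial_t^5 v')^2$ that can be discarded uniformly in $\kappa$. With $\varepsilon v_{xx}$ you would generate uncontrolled boundary traces of $v_x$ and the uniform estimates in the weighted energy $E$ would not close. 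Even with the correct regularization, solving the $\kappa$-problem is itself a substantial step: the paper linearizes, passes to the variable $X=\rho_0 v\in H^1_0(I)$ so that the higher-order Hardy inequality applies, runs a Galerkin scheme on the sixth time-differentiated problem, improves the space regularity afterwards, and closes with a fixed-point argument.

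Second, and more importantly, your final paragraph attributes the restriction $1<\gamma<3$ to the wrong term. For the pure Euler problem the weighted-energy and Hardy bookkeeping for the pressure closes for all $\gamma>1$ (Coutand--Shkoller). The constraint $\gamma<3$ here comes precisely from the Poisson forcing, which you declare benign: in Lagrangian variables $F=-\int_0^x\rho_0\,dy+\tfrac12\int_0^1\rho_0\,dy$ is indeed a fixed function of $x$ (and the constant is determined by the far-field symmetry assumption, not an arbitrary $C_1(t)$), but for $\gamma\neq2$ the natural weight is $\omega_0=\rho_0^{\gamma-1}\sim \mathrm{dist}(x,\partial I)$, so $\rho_0=\omega_0^{1/(\gamma-1)}$ is not smooth up to the boundary, and the requirement that $\omega_0\int_0^x\rho_0\,dy$ lie in $H^3(I)$ (needed for the intermediate variable $\omega_0 v$ in the construction) forces $\omega_0^{\frac{1}{\gamma-1}-1}\in L^2(I)$, which is exactly $\gamma<3$. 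Your proposal as written would not detect this obstruction.
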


The local wellposedness result for the corresponding Euler equation was proved by Coutand and Shkoller in \cite{DS_2009}. Motivated by \cite{DS_2009}, we also use the Lagrangian coordinates to reduce the original system to that in a fixed domain. In our problem, we have the extra potential force term $\phi_{\eta}$ in (\ref{extra}). To handle this term, we will give an explicit formula for it and show that it is a function of $\rho_0$. Then we construct the approximate solution in two steps. Firstly, using Galerkin scheme to find the solution to linearized problem of the degenerate parabolic regularization. In this process, we would make fundamental use of the higher-order Hardy-type inequality introduced in \cite{DS_2009} (we would give a new proof). But we will define an intermediate variable which is different to the one used in \cite{DS_2009}. By using our intermediate variable, the improvement of the space regularity for the solution of linear problem will be easy and clear with less computation. Secondly, using fixed-point scheme to get the solution to the degenerate parabolic regularization. Last we would derive a priori estimates for the approximate solution. This part is more or less similar to that in \cite{DS_2009}. For a self-contained presentation, we will still carry out the proof in Section \ref{ee}.
Below we will mainly focus on the case of $\gamma=2$. The general case for $1<\gamma<3$ is treated in Section \ref{ga3}.

Now we briefly review some related theories and results from various aspects. For Euler-Poisson equations, the existence theory for the stationary solutions has been proved by Deng, Liu, Yang, and Yao in \cite{TY_2002}. For Navier-Stokes-Poisson equations, Li, Matsumura and Zhang\cite{HL_2010} studied optimal decay rate for the system and Zhang, Fang studied global behavior for spherically symmetric case with degenerate viscosity coefficients in \cite{TD}. 

For compressible fluids, Makino proved the local-in-time existence of solution in \cite{Makino_1986} with boundary condition $\rho=0$ for some non-physical restrictions on the initial data. And Lindblad proved the local-in-time existence with vacuum boundary condition $P=0$ for general case of initial data with the main tool which is the passage to the Lagrangian coordinates for reducing the original problem to that in a fixed domain in \cite{Lind_2005}. And H.L. Li, J. Li, Xin \cite{HL_2008}, Luo, Xin, Yang \cite{XinL}, Xin\cite{Xin} also did many works on compressible Navier-Stokes equation with vacuum.

For incompressible flows, Wu solved local well-posedness for the irrotational problem, with no surface tension in all dimensions in \cite{Wu_1997} and \cite{Wu_1999}. Lindblad proved local existence of solutions general problem with no surface tension, assuming the Rayleigh-Taylor sign condition for rotational flows in \cite{Lindblad_2005}. For the problem with surface tension, B.Schweizer proved existence for the general three-dimensional irrotational problem in \cite{Schweizer_2005}. And we also mention the works by Ambrose and Masmoudi \cite{AM}, Coutand and Shkoller \cite{DS}, and P. Zhang and Z.Zhang \cite{ZZ}.

This paper is organized as follows: In Section \ref{s2}, we formulate the problem in Lagrangian coordinates. In Section \ref{s3}, we present some lemmas will be used. In Section \ref{s4}--Section \ref{s5}, we introduce a degenerate parabolic approximation and solve it by a fixed-point method. In Section \ref{ee} --\ref{s7}, we derive the a priori estimates and prove the local well-posedness for $\gamma=2$. In Section \ref{ga3}, we discuss the general case for $1 < \gamma <3$.

\section{Lagrangian Formulation}\label{s2}

Here, we denote $\eta$ as Eulerian coordinates and denote $x$ as Lagrange coordinates, which means $\eta(x,t)$ denotes the ``position'' of the gas particle $x$ at time $t$: 
\begin{equation}
  \partial_t \eta = u \circ \eta \ \ \text{for}\ \  t > 0 \ \ \text{and}\ \  \eta(x,0)=x,
  \label{hhhhh}
\end{equation}  
where $\circ$ denotes the composition $[u\circ\eta](x,t)=u(\eta(x,t),t)$. We also have: 
\begin{equation}
  \begin{split}
    v &= u\circ \eta \ \ (\text{Lagrangian velocity}),\\
    f &= \rho \circ \eta \ \ (\text{Lagrangian density}),\\
    \Phi &= \phi \circ \eta \ \ (\text{Lagrangian potential field}).
  \end{split}
  \label{lagrang}
\end{equation}

\subsection{Fixing the domain and the Lagrangian version of the system}
Noticing (\ref{hhhhh}) and (\ref{lagrang}), the Lagrangian version of system (\ref{euler-poisson})--(\ref{98}) can be written  on the fixed reference domain $I$ as
\begin{align}
  \label{euler-poisson-lag}
  f_t + f \dfrac{\partial}{\eta_x\partial_x} v &= 0 &\text{in}&\ \ I\times (0,T],\\
  f v_t + \dfrac{\partial}{\eta_x\partial_x}(f^{2}) &= f \dfrac{\partial}{\eta_x\partial_x}\Phi &\text{in}&\ \  I\times (0,T],\\ 
  - (\dfrac{\partial}{\eta_x\partial_x})^2\Phi &= C f &\text{in}&\ \  I\times (0,T],\\
  (f, v, \eta) &= (\rho_0, u_0, e) &\text{in}&\ \ I\times \{t=0\},\label{5r}
\end{align}
where $e(x)=x$ denotes the identity map on $I$.

By conservation law of mass, we have
\begin{equation}
  f = \rho \circ \eta = \rho_0 / \eta_x.
  \label{lag_mass}
\end{equation}
Hence, the initial density function $\rho_0$ can be viewed as a parameter in the Euler equations.

Since $\rho_0 >0$ in $I$, (\ref{1dvacuum}) implies that for some positive constant $C$ and $x \in I$ near the vacuum boundary $\Gamma$,
\begin{equation}
  \rho_0 \geq C \text{dist}(x, \Gamma).
  \label{cond_1}
\end{equation}

Hence, for every $x \in I$, we have:
\begin{align}
  \bigg |\dfrac{\partial\rho_0}{\partial x}(x)\bigg |\geq C\ \ \text{when}  \ d(x,\partial I) \leq \alpha,
  \label{cond_2}\\
  \rho_0 \geq C_{\alpha} > 0\ \ \text{when}  \ d(x,\partial I) \geq \alpha
  \label{cond_3}
\end{align}
for some $\alpha >0$ and a constant $C_{\alpha}$ depending on $\alpha$.

In summary, we write the compressible Euler-Poisson System as 
\begin{align}
  \label{lag}
  \rho_0 v_t + (\rho_0^{2}/\eta_x^{2})_x  &= \rho_0 \Phi_x / \eta_x &\text{in}&\ \  I\times (0,T],\\ \label{ffffff}
  - (\Phi_x/\eta_x)_x  &= C \rho_0 &\text{in}&\ \  I\times (0,T], \\ \label{field}
(v, \eta) &= (u_0, e) &\text{in}&\ \ I\times \{t=0\},\\
  \rho_0 &= 0 &\text{on}&\ \  \Gamma,
\end{align}
with $\rho_0 \geq C \text{dist}(x,\Gamma)$ for $x \in I $ near $\Gamma$.

\subsection{The formula for potential force}
Now we try to give an explicit formula for the potential force $\phi_{\eta}$ in (\ref{euler-poisson}) and corresponding term $\Phi_x /\eta_x$ in (\ref{lag}). Set $I(t) = (a(t),b(t))=(\eta(0,t),\eta(1,t))$.
 
First, for every $t$, we can directly get 
\begin{equation}
  \phi_{\eta}(\eta,t) = - \int_{a(t)}^\eta \rho(y,t)dy + M(t).
  \label{}
\end{equation}
It is reasonable to assume that $|\phi_{\eta}(-\infty)|=|\phi_{\eta}(+\infty)|$. Since the gas only occupied bounded interval, the force of gas produced in $-\infty$ and $+\infty$ can be regarded as the same large (see \cite{Sh_2001}). Noticing $\phi_{\eta\eta} = -C\rho \leq 0$, so we get:
\begin{align}
\phi_{\eta}(+\infty)&=-\dfrac{1}{2}\int_{-\infty}^{+\infty} \rho(y,t)dy,\\
\phi_{\eta}(-\infty)&=\dfrac{1}{2}\int_{-\infty}^{+\infty} \rho(y,t)dy,\\
M(t) &= \dfrac{1}{2}\bigg(\int_{a(t)}^{+\infty} \rho(y,t)dy-\int_{-\infty}^{a(t)} \rho(y,t)dy\bigg).
\end{align}
Since $\rho(\eta,t)>0$ in $I(t)$ and $\rho(\eta,t)=0$ when $\eta \leq a(t) $ or $\eta \geq b(t)$, we have
\begin{equation}
  \phi_{\eta}(\eta,t) = - \int_{a(t)}^{\eta} \rho(y,t)dy + \dfrac{1}{2}\int_{a(t)}^{b(t)}\rho(y,t)dy.
  \label{force}
\end{equation}
Then we transform the formula (\ref{force}) to Lagrange variables:
\begin{equation}
  \begin{split}
  \Phi_x /\eta_x(x,t)&= \phi_{\eta}(\eta(x,t),t) \\&=  - \int_{a(t)}^{\eta(x,t)} \rho(y,t)dy + \dfrac{1}{2}\int_{a(t)}^{b(t)} \rho(y,t) dy\\
&=- \int_{\eta(0,t)}^{\eta(x,t)} \rho(\eta(z,t),t)d\eta(z,t) + \dfrac{1}{2}\int_{\eta(0,t)}^{\eta(1,t)} \rho(\eta(z,t),t) d\eta(z,t)\\
  &=- \int_{0}^{x} f(z,t) \eta_z dz + \dfrac{1}{2}\int_{0}^{1} f(z,t) \eta_z dz.
  \end{split}
  \label{lag_force}
\end{equation}
With (\ref{lag_mass}), we can finally get
\begin{equation}
  F:=\dfrac{\Phi_x}{\eta_x} = - \int_{0}^{x} \rho_0(y)dy + \dfrac{1}{2}\int_{0}^{1} \rho_0(y)dy.
  \label{forceF}
\end{equation}

\begin{remark}
If $\rho_0 \in C^{\alpha}$, then $F \in C^{1+\alpha}$, we will see that this regularity is important for the case $\gamma \neq 2$ in Section \ref{ga3}.
\end{remark}

With formula (\ref{forceF}), we can write the whole system as 

\begin{align}
  \label{simsystem}
  \rho_0 v_t + (\rho_0^{2}/\eta_x^{2})_x  &= \rho_0 F &\text{in}&\ \ I \times (0,T],\\ 
  (v,\eta) &=(u_0,e) &\text{in}&\ \ I \times \{t=0\},\\
 \rho_0 &= 0 &\text{on}&\ \  \Gamma, \label{90909}
\end{align}
with $\rho_0 \geq C \text{dist}(x,\Gamma)$ for $x \in I $ near $\Gamma$.

\subsection{The higher-order energy function} The higher-order energy function is defined as follows:
\begin{equation}
  \begin{split}
    E(t;v)&=\sum_{s=0}^4 ||\partial_t^s v(t,.)||_{H^{2-\frac{s}{2}}(I)}^2 + \sum_{s=0}^2||\rho_0\partial_t^{2s}v(t,.)||_{H^{3-s}(I)}^2 \\
  &+||\sqrt{\rho_0}\partial_t\partial_x^2 v(t,.)||_{L^2(I)}^2+||\rho_0^{\frac{3}{2}}\partial_t\partial_x^3 v(t,.)||_{L^2(I)}^2\\&+||\sqrt{\rho_0}\partial_t^3\partial_x v(t,.)||_{L^2(I)}^2+||\rho_0^{\frac{3}{2}}\partial_t^3\partial_x^2 v(t,.)||_{L^2(I)}^2.
\end{split}
  \label{energy}
\end{equation}
Let $P$ denotes a generic polynomial function of its arguments whose meaning may change from line to line. Let 
\begin{equation}
M_0=P(E(0;v)).
\label{M0000}
\end{equation}

\section{Weighted Spaces and A higher-order Hardy-type inequality}\label{s3}
\subsection{Embedding of a Weighted Sobolev Space}
Using $d$ to denote the distance function to the boundary $\Gamma$, and letting $p=1$ or $2$, the weighted Sobolev space $H_{d^p}^1(I)$, with norm given by $\big(\int_Id(x)^p(|R(x)|^2+|\partial_xR(x)|^2)dx\big)^{\frac{1}{2}}$ for any $R \in H_{d^p}^1(I)$, satisfies the following embedding:
\begin{equation}
  H_{d^p}^1(I) \hookrightarrow H^{1-\frac{p}{2}}(I).
  \label{}
\end{equation}
So that there is a constant $C>0$ depending only on $I$ and $p$ such that
\begin{equation}
  ||R||_{1-p/2}^2\leq C\int_Id(x)^p(|R(x)|^2+|\partial_xR(x)|^2)dx.
  \label{emb}
\end{equation}
See, for example, Section 8.8 in \cite{Kufner_1985}.
\subsection{A higher-order Hardy-type inequality}
The following two lemmas can be found in \cite{DS_2009}. We will use Lemma \ref{hardy} to construct the approximate solution in Section \ref{s5} and use Lemma \ref{para_lema} to obtain estimates independent of the regularization parameter defined in Section \ref{s4}.
\begin{lemma}
  Let $s\geq 1$ be a given integer, and suppose that
  \begin{equation}
    u \in H^s(I) \cap H_0^1(I),
    \label{}
  \end{equation}
and d is the the distance function to $\partial I$, we have that $\frac{u}{d} \in H^{s-1}(I)$ with
  \begin{equation}
    \bigg|\bigg|\frac{u}{d}\bigg|\bigg|_{H^{s-1}} \leq C ||u||_{H^s}.
    \label{}
  \end{equation}
  \label{hardy}

\end{lemma}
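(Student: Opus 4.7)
The plan is to reduce to a neighborhood of each endpoint of $I$ via a smooth cutoff and then exploit the vanishing of $u$ on $\partial I$ through an integral representation for $u/d$. On any compact $K\Subset I$ the weight $1/d$ is smooth and bounded, so $\|u/d\|_{H^{s-1}(K)}\leq C_K\|u\|_{H^{s-1}(I)}\leq C_K\|u\|_{H^{s}(I)}$ is immediate. It therefore suffices to prove the estimate on a one-sided neighborhood of each endpoint, and by symmetry I focus on $x=0$, where $d(x)=x$.

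Near $x=0$ I would invoke the one-dimensional Sobolev embedding $H^1(I)\hookrightarrow C(\overline I)$ together with the trace condition $u(0)=0$ to write
\[
\frac{u(x)}{x} \;=\; \frac{1}{x}\int_0^x u'(y)\,dy \;=\; \int_0^1 u'(tx)\,dt,
\]
and then differentiate $k$ times under the integral sign for each $0\leq k\leq s-1$ (first for smooth $u$, then by density),
\[
\frac{d^k}{dx^k}\Bigl(\frac{u}{x}\Bigr) \;=\; \int_0^1 t^{k}\, u^{(k+1)}(tx)\,dt.
\]
Applying Minkowski's integral inequality and then the change of variables $y=tx$ gives
\[
\Bigl\|\tfrac{d^k}{dx^k}(u/x)\Bigr\|_{L^2(0,1)} \;\leq\; \int_0^1 t^{k}\Bigl(\tfrac{1}{t}\int_0^t |u^{(k+1)}(y)|^2\,dy\Bigr)^{1/2}\!dt \;\leq\; \frac{\|u^{(k+1)}\|_{L^2}}{k+\tfrac{1}{2}},
\]
where the $t$-integral converges because the exponent $k-\tfrac12>-1$ for every $k\geq 0$. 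Summing over $k=0,\ldots,s-1$ yields $\|u/x\|_{H^{s-1}}\leq C\|u\|_{H^{s}}$ near $0$; a symmetric argument handles $x=1$, and a partition of unity assembles the global bound.

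The main conceptual content sits in the very first step: the hypothesis $u\in H^1_0$ enters only there, through the pointwise trace $u(0)=u(1)=0$ that legitimises writing $u(x)=\int_0^x u'(y)\,dy$. After that, the weighted estimate collapses to a single Minkowski line and requires no induction on $s$: the factor $t^k$ produced by differentiating inside the integral exactly compensates for the factor $t^{-1/2}$ coming from the rescaling $y=tx$, with room to spare. I expect the only real obstacle to be a purely technical one, namely justifying the density argument and the interchange of differentiation with the integral cleanly, which is standard for $H^s$ functions in one dimension.
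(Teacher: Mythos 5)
Your proof is correct and follows essentially the same route as the paper's: both hinge on the integral representation $\partial_x^k(u/x)=\int_0^1 t^k u^{(k+1)}(tx)\,dt$ obtained from $u(0)=0$ and the fundamental theorem of calculus, followed by an $L^2$ bound via rescaling (you make the Minkowski step and the localization explicit, which the paper leaves implicit). No substantive difference.
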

\begin{proof}
Let $u \in H^s(I)\cap H_0^1(I)$, then for $0\leq m\leq s$:
\begin{equation}
\partial_x^m\bigg(\dfrac{u}{d}\bigg)=\dfrac{f}{d^{m+1}},
\end{equation}
where 
\begin{equation}
f=\sum_{k=0}^m C_m^k \partial_x^{m-k}u (-1)^{k}k!d^{m-k}.
\end{equation}

With simple calculation, we can get
\begin{equation}
\partial_x f = \partial_x^{m+1}u(-1)^m m!d^{m}.
\end{equation}

Now using the fundamental calculus theorem, when $0 \leq x\leq \frac{1}{2}$, we have:
\begin{align}
f(x)&=f(0)+\int_0^x \partial_x f(y) dy\\&=x\int_0^1\partial_x f(\theta x) d\theta\\
&=(-1)^m m!x^{m+1}\int_0^1\partial_x^{m+1}u(\theta x)\theta^{m}d\theta.
\end{align}
Similarly, when $\frac{1}{2} \leq x \leq 1$, we have:
\begin{align}
f(x)=(-1)^{m+1} m!(1-x)^{m+1}\int_0^1\partial_x^{m+1} u(1-\theta(1-x)) \theta^m d\theta.
\end{align}

Then:
\begin{equation}
\begin{split}
||\partial_x^m\bigg(\dfrac{u}{d}\bigg)||_{L^2}^2&=\int_0^{\frac{1}{2}}(\dfrac{f}{x^{m+1}})^2dx+\int_{\frac{1}{2}}^1(\dfrac{f}{(1-x)^{m+1}})^2dx\\
&=\int_0^{\frac{1}{2}}[(-1)^m m!\int_0^1\partial_x^{m+1}u(\theta x)\theta^{m}d\theta]^2 dx\\&\ \ +\int_{\frac{1}{2}}^1[(-1)^{m+1} m!\int_0^1\partial_x^{m+1} u(1-\theta(1-x)) \theta^m d\theta]^2 dx\\
&\leq C||\partial_x^{m+1}u||_{L^2(I)}^2.
\end{split}
\end{equation}

In this way, we finally get:
\begin{equation}
||\dfrac{u}{d}||_{H^{s}} \leq C ||u||_{H^{s+1}}.
\end{equation}

\end{proof}
\begin{lemma}
  Let $\kappa >0$ and $g \in L^{\infty}(0,T;H^s(I))$ be given, and let $f \in H^1(0,T;H^s(I))$ be such that
  \begin{equation}
    f +\kappa f_t =g \ \ \text{in} \ \ (0,T)\times I.
    \label{}
  \end{equation}
  Then,
  \begin{equation}
    ||f||_{L^{\infty}(0,T;H^s(I))} \leq C \max\{||f(0)||_{H^s(I)},||g||_{L^{\infty}(0,T;H^s(I)}\},
  \end{equation}
  \label{para_lema}
where $C$ is independent of $\kappa$.
\end{lemma}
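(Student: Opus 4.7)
The plan is to treat the equation $f + \kappa f_t = g$ as a scalar linear first-order ODE in $t$ with values in $H^s(I)$, and solve it by the integrating-factor (Duhamel) method. First I would multiply both sides by $\kappa^{-1} e^{t/\kappa}$ and rewrite the left-hand side as a total time derivative, obtaining
\begin{equation*}
(e^{t/\kappa} f)_t = \kappa^{-1} e^{t/\kappa} g \quad \text{in } L^2(0,T; H^s(I)).
\end{equation*}
Since $f \in H^1(0,T;H^s(I))$, the standard Bochner-valued fundamental theorem of calculus applies, so integrating from $0$ to $t$ and multiplying through by $e^{-t/\kappa}$ yields the representation
\begin{equation*}
f(t) = e^{-t/\kappa} f(0) + \kappa^{-1} \int_0^t e^{-(t-s)/\kappa} g(s)\, ds.
\end{equation*}

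Next I would take the $H^s(I)$ norm of both sides and pull it inside the time integral using Minkowski's inequality for Bochner integrals. The point, and the only substantive observation in the proof, is that the kernel $\kappa^{-1} e^{-(t-s)/\kappa}$ has total mass
\begin{equation*}
\int_0^t \kappa^{-1} e^{-(t-s)/\kappa}\, ds = 1 - e^{-t/\kappa} \in [0,1),
\end{equation*}
independently of $\kappa$. Combining with the bound $e^{-t/\kappa} \le 1$ on the coefficient of $f(0)$, this gives the convex combination estimate
\begin{equation*}
\|f(t)\|_{H^s(I)} \le e^{-t/\kappa} \|f(0)\|_{H^s(I)} + (1 - e^{-t/\kappa})\,\|g\|_{L^\infty(0,T;H^s(I))},
\end{equation*}
whose right-hand side is at most $\max\{\|f(0)\|_{H^s(I)},\, \|g\|_{L^\infty(0,T;H^s(I))}\}$ for every $t \in [0,T]$. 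Taking the supremum in $t$ yields the stated inequality with $C = 1$.

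There is no real obstacle here; the lemma is essentially the observation that an exponentially decaying convolution kernel of unit mass is a contraction on $L^\infty_t H^s_x$, uniformly in the relaxation parameter $\kappa$. The only item to be a little careful about is the legitimacy of the integrating-factor manipulation at the level of $H^s$-valued functions, but this is immediate from the Bochner product rule given the hypothesis $f \in H^1(0,T;H^s(I))$. The uniformity in $\kappa$ — which is what will be exploited later when passing to the inviscid limit of the parabolic regularization — is built into the cancellation between the prefactor $\kappa^{-1}$ and the $\kappa$-scale of the exponential.
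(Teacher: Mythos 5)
Your proof is correct. The paper itself does not supply an argument for this lemma (it simply defers to the reference of Coutand and Shkoller), and your integrating-factor representation $f(t)=e^{-t/\kappa}f(0)+\kappa^{-1}\int_0^t e^{-(t-s)/\kappa}g(s)\,ds$ followed by the unit-mass kernel observation is exactly the standard argument given there, yielding the estimate with $C=1$.
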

\section{The degenerate parabolic approximation of the System}\label{s4}

\subsection{Smoothing the initial data}
\label{sm}
For the purpose of constructing solutions, we will smooth the initial velocity
field $u_0$ and density field $\rho_0$ while preserving the conditions $\rho_0 > 0 $ in $I$ and (\ref{cond_1}) firstly.

For $\kappa>0$, let $0 \leq \alpha_{\kappa}(x) \in C_c^{\infty}(\mathbb{R})$ denote the standard family of mollifiers with $spt(\alpha_{\kappa}) =  \{x\big| |x| \leq \kappa\}$ and let $E_I$  denote a Sobolev extension operator mapping $H^s(I)$ to $H^s(\mathbb{R})$ for $s \geq 0$.

Now we set smoothed initial velocity filed $u_0^{\kappa}$ as:
\begin{equation}
u_0^{\kappa} = \alpha_{1/|\ln\kappa|} \ast E_I(u_0),
\label{smu}
\end{equation}
and smoothed initial density function $\rho_0^{\kappa}$ is defined as the solution of the elliptic equation:
\begin{align}
\label{smp}
\partial_x^2 \rho_0^{\kappa} &= \partial_x^2[\alpha_{1/|\ln\kappa|}\ast E_I(\rho_0)] &\text{in}&\ \  I,\\
\rho_0^{\kappa} &= 0 &\text{on}&\ \  \Gamma.
\end{align}

So for sufficiently small $\kappa > 0$, $u_0^{\kappa},\rho_0^{\kappa} \in C^{\infty}(\overline{I})$,       $\rho_0^{\kappa} >0$ in $I$, and vacuum condition (\ref{cond_1}) is preserved. Details can be found in \cite{DS_2009}.

From now on, we will denote $u_0^{\kappa}$ by $u_0$ and $\rho_0^{\kappa}$ by $\rho_0$ for convenience and it is easy to show that Theorem \ref{theorem} holds with the optimal regularity by a standard density argument.

\subsection{Degenerate parabolic approximation}
For notational convenience, we will write 
\begin{equation}
  \eta' = \dfrac{\partial \eta}{\partial x}  
  \label{}
\end{equation}
and similarly for other functions.
Now for $\kappa >0$, we consider the following nonlinear degenerate parabolic approximation of the compressible Euler-Poisson System (\ref{simsystem})--(\ref{90909}):
\begin{align}
\label{tytyt}
  \rho_0 v_t + (\dfrac{\rho_0^2}{\eta'^2})' &= \rho_0F+\kappa(\rho_0^2v')' &\text{in}&\ \ I \times[0,T],\\
  (v,\eta)&=(u_0,e)&\text{in}&\ \ I \times\{t=0\},\\
  \rho_0 &= 0 &\text{on}&\ \ \Gamma
  \label{appro}
\end{align}
with $\rho_0(x) \geq C \text{dist}(x,\Gamma)$ for $x \in I$ near $\Gamma$. We emphasis that the data $(\rho_0,u_0)$ has been smoothed as in Section (\ref{sm}).

We will first obtain the existence of the solution to (\ref{tytyt})--(\ref{appro}) on a short time interval $[0,T_{\kappa}]$ (with $T_{\kappa}$ possibly depending on $\kappa$). Then we will show that the time of existence does not depend on $\kappa$ via a priori estimates in Section \ref{ee} for this sequence of solutions independent of $\kappa$. Then the existence of a solution to the compressible Euler-Poisson system is obtained as the weak limit as $\kappa \to 0$ of the sequence of solutions to (\ref{tytyt})--(\ref{appro}).

\section{Solving the parabolic $\kappa$ - problem by a fixed-point method}\label{s5}

\subsection{Assumption on initial data}
Using the fact that $\eta(x,0)=x$ and $F = - \int_{0}^{x} \rho_0(y)dy + \dfrac{1}{2}\int_{0}^{1} \rho_0(y)dy$, the quantity $v_t|_{t=0}$ for the degenerate parabolic $\kappa$-problem can be computed using (\ref{tytyt}):
\begin{equation}
  \begin{split}
u_1:&=  v_t\bigg|_{t=0}\\&=\bigg(- \int_{0}^{x} \rho_0(y)dy + \dfrac{1}{2}\int_{-\infty}^{+\infty} \rho_0(y)dy+\dfrac{\kappa}{\rho_0}[\rho_0^2v']'-\dfrac{1}{\rho_0}(\dfrac{\rho_0^2}{\eta'^2})'\bigg)\bigg|_{t=0}\\
  &=\bigg(- \int_{0}^{x} \rho_0(y)dy + \dfrac{1}{2}\int_{-\infty}^{+\infty} \rho_0(y)dy+\dfrac{\kappa}{\rho_0}[\rho_0^2u_0']'-2\rho_0'\bigg).
\end{split}
  \label{initialdata}
\end{equation}
Inductively, for all $k \geq 2,\ k \in \mathbb{N}$:
\begin{equation}
\begin{split}
 u_k:&= \partial_t^k v\bigg|_{t=0}=\partial_t^{k-1}\bigg(\dfrac{\kappa}{\rho_0}[\rho_0^2v']'-\dfrac{1}{\rho_0}(\dfrac{\rho_0^2}{\eta'^2})'\bigg)\bigg|_{t=0}.
\end{split}
  \label{vt}
\end{equation}
These formulae make it clear that each $\partial_t^k v|_{t=0}$ is a function of space-derivates of $u_0$ and $\rho_0$.

\subsection{Functional framework for the fixed-point scheme}
For $T>0$, we shall denote by $\mathcal{X}_{T}$ the following Hilbert space:
\begin{equation}
  \begin{split}
  \mathcal{X}_{T} = \{v| v \in W^{5,2}(0,T;H^1(I)) \cap W^{4,2}(0,T;H^2(I)); \\
  \rho_0 v \in W^{5,2}(0,T;H^2(I)) \cap W^{4,2}(0,T;H^3(I))\}.
\end{split}
  \label{space}
\end{equation}
which is endowed with its natural Hilbert norm:
\begin{equation}
  \begin{split}
  ||v||_{\mathcal{X}_{T}}^2 = &||v||_{W^{5,2}(0,T;H^1(I))}^2+||v||_{W^{4,2}(0,T;H^2(I))}^2\\&+||\rho_0v||_{W^{5,2}(0,T;H^2(I))}^2+||\rho_0v||_{W^{4,2}(0,T;H^3(I))}^2.
\end{split}
  \label{norm}
\end{equation}

For $M>0$ given sufficiently large, we can define the following closed, bounded, convex subset of $\mathcal{X}_{T}$:
\begin{equation}
  \mathcal{C}_{T}(M)= \{ v \in \mathcal{X}_{T} : \partial_t^a v|_{t=0} = u_a,a=0,1,2,3,4,5,6,||v||_{\mathcal{X}_{T}}^2 \leq M \}.  
  \label{set}
\end{equation}
which is indeed non-empty if $M$ is large enough which would be determined by initial data. Henceforth, we assume that $T > 0$ is given independently of the choice of $v \in \mathcal{C}_T(M)$, such that
\begin{equation}
  \eta(x,t)= x +\int_0^t v(x,s)ds
  \label{}
\end{equation}
is injective for $t \in [0,T]$, and that $\frac{1}{2} \leq \eta'(x,t) \leq \frac{3}{2}$ for $t \in [0,T]$ and $x \in \overline{I}$. This can be achieved by taking $T >0$ sufficiently small: with $e(x)=x$, notice that
\begin{equation}
  ||\eta'-e||_{H^1} = ||\int_0^t v'(x,s)ds||_{H^1} \leq \sqrt{T}M.
  \label{}
\end{equation}
We will apply the fixed-point methodology in $\mathcal{X}_{T}$ to prove the existence of a solution to the $\kappa$-regularized parabolic problem (\ref{appro}).

Finally, we define a polynomial function  $\mathcal{N}_0$ of norms of the non-smoothed initial data $u_0$ and $\rho_0$ as follows:
\begin{equation}
  \mathcal{N}_0 = P_{\kappa}(||\rho_0||_{L^2},||u_0||_{L^2}),
  \label{N0}
\end{equation}
where $P_{\kappa}$ is a generic polynomial with coefficients dependent on powers of $|\ln\kappa|$.

Using the properties of the convolution (\ref{smu}) and (\ref{smp}), $\forall s \geq 1$, $\forall k \in {1,2,3,4,5,6}$, the quantities defined in (\ref{vt}) (using the smoothed initial data $u_0^{\kappa}$ and $\rho_0^{\kappa}$) satisfies:
\begin{equation}
||u_k||_{H^s} \leq P(||\rho_0^{\kappa}||_{H^{s+k}},||u_0^{\kappa}||_{H^{s+2k}}) \leq C_s P_{\kappa}(||\rho_0||_{L^2},||u_0||_{L^2}) \leq \mathcal{N}_0.
\end{equation}
%

\subsection{Linearizing the degenerate parabolic $\kappa$-problem}
For every $\overline{v} \in \mathcal{C}_{T}(M)$, we define $\overline{\eta}= x +\int_0^t \overline{v}(x,\tau)d\tau$ and consider the linear equation for $v$:
\begin{equation}
  \begin{split}
  \rho_0 v_t - \kappa[\rho_0^2v']'&= -[\dfrac{\rho_0^2}{\overline{\eta}'^2}]'+\rho_0 F,\\
	v(\cdot,0)&= u_0,
  \label{linear}
  \end{split}
\end{equation}
where $F$ is defined in (\ref{forceF}).

In order to use high-order Hardy-type inequality, it will be convenient to introduce the new variable $X=\rho_0 v$, which belongs to $H_0^1(I)$ (can be seen below). Here we choose a different variable $X$ from that used by Coutand and Shkoller in \cite{DS_2009}, which would simplify the process of improving the space regularity for solution of (\ref{linear}).

By a simple computation, we can see that (\ref{linear}) is equivalent to
\begin{equation}
  v_t - \kappa\dfrac{1}{\rho_0}[\rho_0^2v']'= -\dfrac{1}{\rho_0}[\dfrac{\rho_0^2}{\overline{\eta}'^2}]'+ F,  
  \label{}
\end{equation}
and hence that
\begin{align}
\label{stewewe}
  \dfrac{X_t}{\rho_0} - \kappa X'' + \kappa\dfrac{\rho_0''}{\rho_0}X &= G &\text{in}&\ \  I\times [0,T],\\
  X&=0&\text{on}&\ \  \Gamma\times [0,T],\\
  X|_{t=0}&=\rho_0 u_0&\text{in}&\ \  I,
  \label{Xequ}
\end{align}
where
\begin{equation}
  G=F+\dfrac{2}{\overline{\eta}'}(\dfrac{\rho_0}{\overline{\eta}'})'= - \int_{0}^{x} \rho_0(y)dy + \dfrac{1}{2}\int_{0}^{1} \rho_0(y)dy+\dfrac{2}{\overline{\eta}'}(\dfrac{\rho_0}{\overline{\eta}'})'.
  \label{Gdef}
\end{equation}

We shall therefore solve the degenerate linear parabolic problem (\ref{stewewe})--(\ref{Xequ}) with Dirichlet boundary conditions, which (as we will prove) will surprisingly admit a solution with arbitrarily high space regularity (depending on the regularity of $G$ on the right-hand side of (\ref{stewewe}) and the initial data of course), and not just an $H_0^1(T)$-type weak solution. After we obtain the solution $X$, we will then easily find our solution $v$ to (\ref{linear}).

In order to apply the fixed-point theorem, we shall obtain estimates for $v$ with certain high space-time regularity. Here, we study the sixth time-differentiated problem and define the new variable 
\begin{equation}
  Y = \partial_t^6X = \rho_0\partial_t^6 v.
  \label{Ydefi}
\end{equation} 

We consider the following equation for $Y$
\begin{align}
\label{fasss}
  \dfrac{Y_t}{\rho_0} - \kappa Y'' + \kappa\dfrac{\rho_0''}{\rho_0}Y &= \partial_t^6 G&\text{in}&\ \  I\times [0,T],\\
  Y&=0&\text{on}&\ \  \Gamma\times [0,T],\\
  Y|_{t=0}&=  \rho_0u_6&\text{in}&\ \  I.
  \label{Yequ}
\end{align}
where $u_6$ is given by (\ref{vt})
\subsection{Existence of a weak solution to the linear problem (\ref{fasss})-(\ref{Yequ}) by a Galerkin scheme.}
First we try to show that $\partial_t^6G$ is a function in $L^2(0,T;L^2(I))$.

\begin{equation}
  \begin{split}
    \int_0^T||\partial_t^6 G||_{L^2(I)}^2 &=\int_0^T||\partial_t^6(\dfrac{2}{\overline{\eta}'}(\dfrac{\rho_0}{\overline{\eta}'})')||_{L^2(I)}^2\\
    &\leq C\int_0^T||\rho_0\partial_t^5\overline{v}''+\partial_t^5\overline{v}'||_{L^2(I)}^2 + \text{l.o.t}\\
    &\leq C\int_0^T||\rho_0\partial_t^5 \overline{v}||_{H^2(I)}^2 + C\int_0^T||\partial_t^5 \overline{v}||_{H^1(I)}^2 +\text{l.o.t}\\
    &\leq CP(||\overline{v}||_{\mathcal{X}_T}^2)
\end{split}
  \label{Gestim}
\end{equation}

Now we begin our Galerkin scheme. Let $\{e_n\}_{n \in \mathbb{N}}$ denote a Hilbert basis of $H_0^1(I)$. Such a choice of basis is indeed possible as we can take for instance the eigenfunctions of the Laplace operator on $I$ with vanishing Dirichlet boundary conditions. We then define the Galerkin approximation at order $n \geq 1$ of (\ref{Yequ}) as  $Y_n =\sum_{i=0}^n \lambda_i^n(t)e_i$,  with $\lambda_i^n(t)$ being the solution of the ODE system:
\begin{equation}
  \begin{split}
       (\dfrac{Y_{nt}}{\rho_0},e_k)_{L^2(I)} + (\kappa Y_n', e_k')_{L^2(I)} + (\dfrac{\rho_0''}{\rho_0}Y_n, e_k)_{L^2(I)} &= (\partial_t^6G,e_k)_{L^2(I)}\\
    \lambda_i^n(0) &= (Y_{\text{init}}, e_i)_{L^2(I)}\\ \forall k &\in {0,\dots n}
\end{split}
  \label{weakform}
\end{equation}

Since each $e_i$ is in $H_0^1(I)$, we have by high-order Hardy-type inequality (\ref{hardy}) that $\dfrac{e_i}{\rho_0} \in L^2(I)$. Therefore each integral in (\ref{weakform}) is well-defined. Furthermore, as the $\{e_i\}$ are linearly independent, so are the $\{\dfrac{e_i}{\sqrt{\rho_0}}\}$ and therefore the determinant of the matrix 
\begin{equation*}
  \bigg( (\dfrac{e_i}{\sqrt{\rho_0}},\dfrac{e_j}{\sqrt{\rho_0}} )_{L^2(I)}    \bigg)_{(i,j) \in \mathbb{N}_n=\{1,\dots,n\}}
  \label{Ystart}
\end{equation*}
is nonzero. This implies that our finite-dimensional Galerkin approximation (\ref{weakform}) is a well-defined first-order differential system of order $n+1$, which therefore has a solution on a time interval $[0, T_n]$, where $T_n$ may depend on the dimension $n$ of the Galerkin approximation.

Next we show that $T_n \geq T$, with $T$ independent of $n$.

Noticing that $Y_n$ is a linear combination of the $e_i(i \in \mathbb{N}_n)$, we have that
\begin{equation}
  (\dfrac{Y_{nt}}{\rho_0}, Y_n)_{L^2(I)}+\kappa(Y_n',Y_n')_{L^2(I)}+(\dfrac{\rho_0''}{\rho_0}Y_n, Y_n)_{L^2(I)} =(\partial_t^6 G, Y_n)_{L^2(I)}
  \label{weak}
\end{equation}

Hence, we have
\begin{equation}
  \begin{split}
  &\ \ \ \dfrac{1}{2}\dfrac{d}{dt}\int_I\dfrac{Y_n^2}{\rho_0}-\kappa||\rho_0''||_{L^{\infty}}\int_I\dfrac{Y_n^2}{\rho_0} +\kappa\int_I Y_n'^2 \\
&\leq ||\partial_t^6G||_{L^2(I)}^2+||\dfrac{Y_n}{\sqrt{\rho_0}}||_{L^2(I)}^2||\rho_0||_{L^{\infty}(I)}
\end{split}
  \label{X_est}
\end{equation}

Using Poincare inequality $||Y_n||_{L^2(I)}^2 \leq ||Y_n'||_{L^2(I)}^2$ and Gronwell inequality, then we can find $T>0$ (independent of $n$) such that:
\begin{equation}
  \sup_{t \in [0,T]}C\int_I\dfrac{Y_n^2}{\rho_0} +\kappa\int_0^T||Y_n||_{H^1(I)}^2 \leq \int_0^T ||\partial_t^6G||_{L^2(I)}^2 + ||\dfrac{\rho_0u_6}{\sqrt{\rho_0}}||_{L^2(I)}^2
  \label{esti_1}
\end{equation}
noticing (\ref{Gestim}) and the fact $\overline{v} \in \mathcal{C}_{T}(M)$,
\begin{equation}
  \sup_{t \in [0,T]}C\int_I\dfrac{Y_n^2}{\rho_0} +\kappa\int_0^T||Y_n||_{H^1(I)}^2 \leq \mathcal{N}_0 + CP(||\overline{v}||_{\mathcal{X}_{T}}) 
  \label{}
\end{equation}
where $\mathcal{N}_0$ is defined in (\ref{N0}). Thus, there exists a subsequence of $(Y_n)$ which converges weakly to some $Y \in L^2(0,T;H_0^1(I))$, which satisfies
\begin{equation}
  \sup_{t \in [0,T]}C\int_I\dfrac{Y^2}{\rho_0} +\kappa\int_0^T||Y||_{H^1(I)}^2 \leq \mathcal{N}_0 + CP(||\overline{v}||_{\mathcal{X}_{T}}) 
  \label{Y_H}
\end{equation}

Now take the limit $n \to \infty$ in (\ref{weakform}), we have
\begin{equation}
 (\dfrac{Y_{t}}{\rho_0}, e_k)_{L^2(I)}+\kappa(Y',e_k')_{L^2(I)}+(\dfrac{\rho_0''}{\rho_0}Y, e_k)_{L^2(I)} =(\partial_t^6 G, e_k)_{L^2(I)}
\end{equation}
for every $k$.

Hence, (\ref{Yequ}) is satisfied in the sense of distributions, and that
\begin{equation}
  \dfrac{Y_t}{\rho_0} \in L^2(0,T;H^{-1}(I))
  \label{Yend}
\end{equation}

Now we define
\begin{equation}
  Z = \int_0^t Y(,\tau)d\tau + \rho_0u_5,
  \label{zd}
\end{equation}
\begin{equation}
  W = \int_0^t Z(.,\tau)d\tau + \rho_0u_4,
  \label{Wdef}
\end{equation}
and
\begin{equation}
  X = \int_0^t\int_0^{t_1}\int_0^{t_2}\int_0^{t_3}\int_0^{t_4}Z(,\tau)d\tau dt_4 dt_3 dt_2 dt_1 + \sum_{i=0}^{5} \dfrac{\rho_0 u_i t^{i}}{i!}.
  \label{531}
\end{equation}

We then see that $X \in W^{6,2}([0,T];H_0^1(I))$ is a solution of (\ref{stewewe})--(\ref{Xequ}), with $\partial^6_t X= Y$.

\subsection{Improving Space Regularity}
In order to prove that $v \in \mathcal{C}_{T}(M)$ and then obtain a fixed point for the map $\Theta: \overline{v} \mapsto v$, we need to establish better
space regularity for $Z$, and hence $X$ and $v$.

As $Z$ defined in (\ref{zd}), then $Z$ satisfies the following equation:
\begin{equation}
\dfrac{Z_t}{\rho_0} - \kappa Z'' +\kappa \dfrac{\rho_0''}{\rho_0}Z = \partial_t^5 G.
\label{Zequ}
\end{equation}

With high-order Hardy-type inequality, we have
\begin{equation}
  \begin{split}
  \kappa||Z''||_{L^2(I)} &\leq ||\dfrac{Z_t}{\rho_0}||_{L^2(I)}+||\dfrac{\rho_0''}{\rho_0}Z||_{L^2(I)} + ||\partial_t^5 G||_{L^2(I)} \\
  &\leq ||Y||_{H^1(I)}+||\rho_0''||_{L^{\infty}}||Z||_{H^1(I)} + ||\partial_t^5 G||_{L^2(I)}.
\end{split}
\end{equation}

So the regularity of $Z=\rho_0\partial_t^5v$ can be improved to $L^2(0,T;H^2(I))$, and then $v=\dfrac{X}{\rho_0}$ is well-defined and can be easily proved it is a solution to (\ref{linear}).

Furthermore, as $W$ defined in (\ref{Wdef}), we can see that $W= \rho_0 \partial_t^4 v$ and $W \in L^2(0,T;H^2(I))$. And we have a similarly estimate:
\begin{equation}
  \begin{split}
    \kappa||W''||_{H^1(I)} &\leq ||\dfrac{W_t}{\rho_0}||_{H^1(I)}+||\dfrac{\rho_0''}{\rho_0}W||_{H^1(I)} + ||\partial_t^4 G||_{H^1(I)} \\
  &\leq ||Z||_{H^2(I)}+||\rho_0''||_{L^{\infty}}||W||_{H^2(I)} + ||\partial_t^4 G||_{H^1(I)}.
  \end{split}
  \label{}
\end{equation}

Hence that $\rho_0\partial_t^4 v \in L^2(0,T;H^3(I))$ and $\partial_t^4 v \in L^2(0,T;H^2(I))$, and we have $v \in \mathcal{X}_T$.

\subsection{The existence of a fixed-point}
First it is clear that there is only one solution $v \in L^2(0,T;H^2(I))$ of (\ref{linear}) with $v(0)=u_0$, since if we denote by $\omega$ another solution with the same regularity, then the difference $\delta v = v -\omega$ satisfies $\delta v(\cdot,0)=0$ and $\rho_0 \delta v_t - \kappa[\rho_0^2\delta v']'=0$, which implies
\begin{equation}
  \dfrac{1}{2}\dfrac{d}{dt}\int_I\rho_0\delta v^2 +\kappa\int_I\rho_0\delta v'^2 = 0
  \label{}
\end{equation}
which together with $\delta v(\cdot,0)=0$ implies $\delta v =0$. So the mapping $\overline{v} \to v$ is well defined.

Now we will prove $v \in \mathcal{C}_{T}(M)$ when $T$ is sufficiently small.

First, we need to re-estimate $L^2(0,T;H^2(I))$-norm of $Z = \rho_0 \partial_t^5 v$. Like (\ref{X_est}), we can easily have the following:
\begin{equation}
  \dfrac{1}{2}\dfrac{d}{dt}\int_I\dfrac{Y^2}{\rho_0}-\kappa||\rho_0''||_{L^{\infty}}\int_I\dfrac{Y^2}{\rho_0} +\kappa\int_I Y'^2 \leq \big|(\partial_t^6G,Y)_{L^2(I)}\big|,
  \label{gop}
\end{equation}
and
\begin{equation}
  \sup_{t \in [0,T]}C\int_I\dfrac{Y^2}{\rho_0} +2\kappa\int_0^T||Y||_{H^1(I)}^2 \leq \int_0^T \big|(\partial_t^6G,Y)_{L^2(I)}\big| + ||\dfrac{Y_{\text{init}}}{\sqrt{\rho_0}}||_{L^2(I)}^2
  \label{Yest_2}
\end{equation}

Since
\begin{equation}
  \begin{split}
 &\ \ \ \ \int_0^T \big|(\partial_t^6G,Y)_{L^2(I)}\big| \\&=\int_0^T\big|(\sqrt{\rho_0}\partial_t^6G,\dfrac{Y}{\sqrt{\rho_0}})_{L^2(I)}\big|\\& \leq \int_0^T ||\sqrt{\rho_0}\partial_t^6 G||_{L^2(I)}||\dfrac{Y}{\sqrt{\rho_0}}||_{L^2(I)}
  \\&\leq \sup_{t \in [0,T]}||\dfrac{Y}{\sqrt{\rho_0}}||_{L^2(I)}(\int_0^T 1^2)^{1/2}(\int_0^T ||\sqrt{\rho_0}\partial_t^6 G||_{L^2(I)}^2)^{1/2}\\
  \\&\leq CTP(||\overline{v}||_{\mathcal{X}_T}^2) + CT \sup_{t \in [0,T]}||\dfrac{Y}{\sqrt{\rho_0}}||_{L^2(I)}^2,
\end{split}
  \label{}
\end{equation}
so when $T$ is sufficiently small, we can get
\begin{equation}
  \sup_{t \in [0,T]}C\int_I\dfrac{Y^2}{\rho_0} +2\kappa\int_0^T||Y||_{H^1(I)}^2 \leq \mathcal{N}_0 +  CTP(||\overline{v}||_{\mathcal{X}_T}^2).
  \label{Yest}
\end{equation}

Considering (\ref{Zequ}), and using high-order Hardy-type inequality (\ref{hardy}) and the estimate (\ref{Yest}), we have
\begin{equation}
  \begin{split}
&\ \ \ \  C\int_0^T ||Z''||_{L^2(I)}^2 \\&\leq \int_0^T||\dfrac{Z_t}{\rho_0}||_{L^2(I)}^2+\int_0^T||\dfrac{\rho_0''}{\rho_0}Z||_{L^2(I)}^2 + \int_0^T||\partial_t^5 G||_{L^2(I)}^2\\
  &\leq \int_0^T||Y||_{H^1(I)}^2 + \int_0^T||\rho_0''||_{L^{\infty}}||Z||_{H^1(I)}^2+\mathcal{N}_0+CTP(||\overline{v}||_{\mathcal{X}_T}^2)\\
  &\leq \mathcal{N}_0+CTP(||\overline{v}||_{\mathcal{X}_T}^2).
  \end{split}
\end{equation}

This implies
\begin{equation}
  \begin{split}
  ||\rho_0\partial_t^5 v||_{L^2(0,T;H^2(I))}^2 &\leq  \mathcal{N}_0+CTP(||\overline{v}||_{\mathcal{X}_T}^2)\\
  ||\partial_t^5 v||_{L^2(0,T;H^1(I))}^2 &\leq  \mathcal{N}_0+CTP(||\overline{v}||_{\mathcal{X}_T}^2)
\end{split}
  \label{}
\end{equation}

The second inequality is following by using the high-order Hardy-type inequality. The left part of $\mathcal{X}_T$ norm can be estimated in almost the same way.

So finally we get
\begin{equation}
  ||v||_{\mathcal{X}_T}^2 \leq \mathcal{N}_0+CTP(||\overline{v}||_{\mathcal{X}_T}^2)
  \label{kkkkkkkk}
\end{equation}

Take 
\begin{equation}
  T\leq \dfrac{\mathcal{N}_0}{C P(M)},
  \label{Time}
\end{equation} we have $||v||_{\mathcal{X}_T}^2 \leq 2\mathcal{N}_0$. Let us fix $M=2\mathcal{N}_0$, then $v \in C_{T}(M)$.

Now we have the mapping $\Theta: \overline{v} \to v$ is actually from $\mathcal{C}_T(M)$ into itself for $T=T_{\kappa}$ satisfying (\ref{Time}). Then, we can get a sequence of functions $v^{(n)} \in \mathcal{C}_T(M)$, where $v^{(n+1)} = \Theta(v^{(n)})$. It is obvious that $v^{(n)}$ converges weakly in $\mathcal{X}_T$. Furthermore, we have the following lemma which will show that $\rho_0v^{(n)}$ converges strongly in $L^2(0,T;H^1(I))$ and hence $v^{(n)}$ converges strongly in $L^2(0,T;L^2(I))$, which will lead a fixed-point to system (\ref{appro}).
\begin{lemma}
For the sequence of functions $v^{(n)}$ we defined before, we have: 
\begin{equation}
\begin{split}
  &\ \ \ \ ||\rho_0(v^{(n+2)}-v^{(n+1)})||_{L^2(0,T;H^1(I))}^2\\& \leq CTP(||\rho_0(v^{(n+1)}-v^{(n)})||_{L^2(0,T;H^1(I)}^2).
\end{split}
  \label{}
\end{equation}
\end{lemma}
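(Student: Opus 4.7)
The plan is to subtract the two instances of the linearized $\kappa$-problem, apply the same weighted energy estimate used in Subsection~5.5, and bound the resulting source by $\|\rho_0\tilde w\|_{L^2(0,T;H^1)}^2$, where $\tilde w := v^{(n+1)}-v^{(n)}$. Set $w = v^{(n+2)}-v^{(n+1)}$, $\delta X := \rho_0 w$, and $\tilde X := \rho_0 \tilde w$. Subtracting the linearized equation (5.1) written for $(v^{(n+2)}, \bar v = v^{(n+1)})$ from the same equation written for $(v^{(n+1)}, \bar v = v^{(n)})$, passing to the $X$-formulation of Subsection~5.3, and noting that the $F$-part of $G$ is independent of $\bar\eta$ and cancels, I get
$$\frac{(\delta X)_t}{\rho_0} - \kappa(\delta X)'' + \kappa\frac{\rho_0''}{\rho_0}\delta X = \delta G, \qquad \delta X|_\Gamma = 0, \qquad \delta X(\cdot,0) = 0,$$
with
$$\delta G = 2\rho_0'\!\left(\frac{1}{(\partial_x\eta^{(n+1)})^2} - \frac{1}{(\partial_x\eta^{(n)})^2}\right) - 2\rho_0\!\left(\frac{\partial_x^2\eta^{(n+1)}}{(\partial_x\eta^{(n+1)})^3} - \frac{\partial_x^2\eta^{(n)}}{(\partial_x\eta^{(n)})^3}\right).$$

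The energy step would copy Subsection~5.5 verbatim with $Y$ replaced by $\delta X$ and $\partial_t^6 G$ replaced by $\delta G$. Testing against $\delta X$, integrating by parts, absorbing $\kappa\!\int\!(\rho_0''/\rho_0)(\delta X)^2$ by Grönwall since $\rho_0'' \in L^\infty$, and handling the RHS by Poincaré and Young as $|\int\delta G\cdot\delta X| \leq (C/\kappa)\|\delta G\|_{L^2}^2 + (\kappa/2)\|(\delta X)'\|_{L^2}^2$, then integrating in $t$ with the zero initial data, I arrive at
$$\|\delta X\|_{L^2(0,T;H^1(I))}^2 \leq C(\kappa)\!\int_0^T\!\|\delta G(\cdot,t)\|_{L^2}^2\,dt.$$
Since $\|\rho_0(v^{(n+2)}-v^{(n+1)})\|_{L^2(0,T;H^1)}^2 = \|\delta X\|_{L^2(0,T;H^1)}^2$, the lemma reduces to proving the source bound $\int_0^T\|\delta G\|_{L^2}^2 \leq CT\,P(\|\tilde X\|_{L^2(0,T;H^1)}^2)$.

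For the source, the mean-value theorem together with the bounds $\tfrac12\le\partial_x\eta^{(k)}\le\tfrac32$ and the identities $\partial_x\eta^{(n+1)}-\partial_x\eta^{(n)} = \int_0^t\partial_x\tilde w\,ds$, $\partial_x^2\eta^{(n+1)}-\partial_x^2\eta^{(n)} = \int_0^t\partial_x^2\tilde w\,ds$ gives the pointwise estimate
$$|\delta G(x,t)| \leq C\bigl(|\rho_0'|+\rho_0\,|\partial_x^2\eta^{(n)}|\bigr)\!\int_0^t\!|\partial_x\tilde w|\,ds + C\rho_0\!\int_0^t\!|\partial_x^2\tilde w|\,ds.$$
Cauchy--Schwarz in $s$ followed by time integration then produces
$$\int_0^T\|\delta G\|_{L^2}^2\,dt \leq CT^2\!\int_0^T\bigl(\|\partial_x\tilde w\|_{L^2}^2 + \|\rho_0\,\partial_x^2\tilde w\|_{L^2}^2\bigr)\,ds,$$
with constants depending on $\|\rho_0\|_{C^1}$ and on the $\mathcal{C}_T(M)$-bound on $\partial_x^2\eta^{(n)}$.

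The main obstacle is converting the integrands $\|\partial_x\tilde w\|_{L^2}^2$ and $\|\rho_0\partial_x^2\tilde w\|_{L^2}^2$ to the weaker norm $\|\tilde X\|_{H^1} = \|\rho_0\tilde w\|_{H^1}$, since neither quantity is controlled by $\|\tilde X\|_{H^1}$ alone; the identity $\tilde w = \tilde X/\rho_0$ is spoiled by the vanishing weight at $\Gamma$. Here I would invoke the higher-order Hardy inequality of Lemma~3.2 applied to $\tilde X \in H_0^1$, which gives at once $\|\tilde w\|_{L^2} \leq C\|\tilde X\|_{H^1}$, and then interpolate between this weighted low-regularity bound and the uniform a priori bound $\|\tilde w\|_{\mathcal{X}_T} \leq 2\sqrt{M}$ coming from $v^{(n)},v^{(n+1)} \in \mathcal{C}_T(M)$. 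The resulting excess factors (depending on $M$, $\kappa$, and $\|\rho_0\|_{C^s}$) are absorbed into the generic polynomial $P$, while the factor $T$ pulled out of the time integration survives and delivers the contraction for sufficiently small $T$, completing the Cauchy property needed to close the fixed-point argument.
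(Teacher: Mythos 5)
Your overall architecture (difference equation, zero initial data, weighted energy estimate as in the $Y$-estimate, reduction to a bound on the source) matches the paper, and your identification of $\delta G$ is correct. But there is a genuine gap at exactly the point you flag as ``the main obstacle,'' and the proposed rescue does not close it. Your plan estimates $\|\delta G\|_{L^2}$ directly, which forces you to control $\|\partial_x\tilde w\|_{L^2}$ (from the $\rho_0'$-term, which carries no interior weight) and $\|\rho_0\partial_x^2\tilde w\|_{L^2}$ (from the $\bar\eta''$-term). Neither is dominated by $\|\rho_0\tilde w\|_{H^1}$: Hardy gives $\|\tilde w\|_{L^2}\le C\|\rho_0\tilde w\|_{H^1}$ but says nothing about $\tilde w'$, and $\rho_0\tilde w''=(\rho_0\tilde w)''-2\rho_0'\tilde w'-\rho_0''\tilde w$ needs $\rho_0\tilde w\in H^2$. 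Interpolating against the uniform $\mathcal{C}_T(M)$ bound, as you suggest, yields at best $\int_0^T\|\delta G\|_{L^2}^2\lesssim C(M)T^{\theta}\bigl(\|\rho_0\tilde w\|_{L^2(0,T;H^1)}^2\bigr)^{1/2}$, i.e.\ an iteration of the form $a_{n+1}\le\lambda a_n^{1/2}$. That is not the quadratic structure claimed in the lemma, and it does not make the sequence Cauchy: such an iteration converges to the fixed point $\lambda^2>0$, not to $0$, so the fixed-point argument does not close.

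The paper avoids this entirely by never differentiating the pressure difference. Writing the source in divergence form, $-\bigl[\rho_0^2\bigl((\eta^{(n+1)})'^{-2}-(\eta^{(n)})'^{-2}\bigr)\bigr]'$, and testing against $w=U/\rho_0$, one integrates by parts in $x$ (the boundary term vanishes since $\rho_0=0$ on $\Gamma$) to obtain
\begin{equation*}
\Bigl(\rho_0\,\tfrac{(\eta^{(n+1)})'^2-(\eta^{(n)})'^2}{(\eta^{(n+1)})'^2(\eta^{(n)})'^2},\ \rho_0 w'\Bigr)_{L^2},
\end{equation*}
in which only the weighted first derivatives appear: $\rho_0\bigl((\eta^{(n+1)})'-(\eta^{(n)})'\bigr)=\int_0^t\rho_0\tilde w'$ with $\|\rho_0\tilde w'\|_{L^2}\le\|(\rho_0\tilde w)'\|_{L^2}+\|\rho_0'\tilde w\|_{L^2}\le C\|\rho_0\tilde w\|_{H^1}$ by Hardy, and likewise $\|\rho_0 w'\|_{L^2}\le C\|U\|_{H^1}$, the latter absorbed into the $\kappa$-dissipation via Young's inequality with small $\delta$. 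This keeps the estimate quadratic in $\|\rho_0\tilde w\|_{L^2(0,T;H^1)}$ and delivers the genuine contraction. You should replace your pointwise expansion of $\delta G$ by this integration by parts; the rest of your argument then goes through.
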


\begin{proof}
 It is clear that $v^{(n+2)}-v^{(n+1)}$ satisfies the equation:
  \begin{equation}
    \begin{split}
\label{v1v2}
    \rho_0 (v^{(n+2)}-v^{(n+1)})_t - \kappa[\rho_0^2(v^{(n+2)}-v^{(n+1)})']' &= \rho_0[G(v^{(n+1)})-G(v^{(n)})],\\
    (v^{(n+2)}-v^{(n+1)})|_{t=0}&=0.
  \end{split}
  \end{equation}

Let $U=\rho_0 (v^{(n+2)}-v^{(n+1)})$, similar as (\ref{gop}), we have:
\begin{equation}
\begin{split}
  &\ \ \ \ \dfrac{1}{2}\dfrac{d}{dt}\int_I\dfrac{U^2}{\rho_0}-\kappa||\rho_0''||_{L^{\infty}}\int_I\dfrac{U^2}{\rho_0} +\kappa\int_I U'^2 \\&\leq \big|([G(v^{(n+1)})-G(v^{(n)})],U)_{L^2(I)}\big|\\
&=\big|(\rho_0\dfrac{(\eta^{(n+1)})'^2-(\eta^{(n)})'^2}{(\eta^{(n+1)})'^2(\eta^{(n)})'^2},\rho_0(v^{(n+2)}-v^{(n+1)})')_{L^2(I)}\big|\\
&\leq C_{\delta}||\int_0^t \rho_0(v^{(n+1)}-v^{(n)})'||_{L^2}^2+\delta||\rho_0(v^{(n+2)}-v^{(n+1)})'||_{L^2}^2\\
&\leq C_{\delta}||\int_0^t \rho_0(v^{(n+1)}-v^{(n)})'||_{L^2}^2+\delta C||U||_{H^1}^2.
\end{split}
  \label{}
\end{equation}

Then choose $\delta$ small enough, and using Poincare inequality, Gronwell inequality and high order Hardy type inequality, we finally have

\begin{equation}
\begin{split}
\dfrac{\kappa}{2}\int_0^T||U||_{H^1}^2 &\leq CTP(||\rho_0(v^{(n+1)}-v^{(n)})'||_{L^2(0,T;L^2(I))}^2)\\&\leq 
CTP(||\rho_0(v^{(n+1)}-v^{(n)})||_{L^2(0,T;H^1(I))}^2).
\end{split}
\end{equation}

\end{proof}

Thereby, we prove the following Theorem:
\begin{theorem}
  If the initial data is smooth, then there exists a unique solution $v_{\kappa} \in \mathcal{X}_T$ to the degenerate parabolic $\kappa$-problem (\ref{appro}) for sufficiently small $T$.
  \label{solution}
\end{theorem}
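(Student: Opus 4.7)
The plan is to assemble the pieces built in Section~\ref{s5} into a standard fixed-point argument for the mapping $\Theta:\overline{v}\mapsto v$ that sends each $\overline{v}\in\mathcal{C}_T(M)$ to the unique solution of the linear problem (\ref{linear}). First, I will note that the construction of $v$ itself is already complete: given $\overline{v}\in\mathcal{C}_T(M)$, the Galerkin scheme for the sixth-time-differentiated equation (\ref{fasss})--(\ref{Yequ}) yields $Y\in L^2(0,T;H_0^1(I))$ with $Y_t/\rho_0\in L^2(0,T;H^{-1}(I))$, and then successive time integrations (\ref{zd}), (\ref{Wdef}), (\ref{531}) reconstruct $X\in W^{6,2}(0,T;H_0^1(I))$ solving (\ref{stewewe})--(\ref{Xequ}); dividing by $\rho_0$ defines $v$ as a solution of (\ref{linear}), and the regularity bootstrap via the higher-order Hardy-type inequality (Lemma~\ref{hardy}) gives $v\in\mathcal{X}_T$. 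So the map $\Theta$ is well defined into $\mathcal{X}_T$.

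Next I will verify that, after possibly shrinking $T$, $\Theta$ actually sends $\mathcal{C}_T(M)$ into itself. The compatibility conditions $\partial_t^a v|_{t=0}=u_a$ for $a=0,\dots,6$ hold by construction, since the time-integration starting points in (\ref{zd}), (\ref{Wdef}), (\ref{531}) are precisely the $\rho_0 u_i$ dictated by (\ref{vt}). For the size bound, estimate (\ref{kkkkkkkk}) gives $\|v\|_{\mathcal{X}_T}^2\leq \mathcal{N}_0+CTP(\|\overline{v}\|_{\mathcal{X}_T}^2)\leq\mathcal{N}_0+CTP(M)$. Fixing $M=2\mathcal{N}_0$ and choosing $T=T_\kappa$ small enough to satisfy (\ref{Time}) yields $\|v\|_{\mathcal{X}_T}^2\leq M$, so $\Theta:\mathcal{C}_T(M)\to\mathcal{C}_T(M)$.

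To extract a fixed point I will iterate. Starting from any $v^{(0)}\in\mathcal{C}_T(M)$ (for instance the one built from the smoothed initial data), define $v^{(n+1)}=\Theta(v^{(n)})$; since $\mathcal{C}_T(M)$ is a bounded, closed, convex subset of the Hilbert space $\mathcal{X}_T$, a subsequence of $v^{(n)}$ converges weakly in $\mathcal{X}_T$ to some $v_\kappa\in\mathcal{C}_T(M)$. The contraction lemma just established,
\begin{equation*}
\|\rho_0(v^{(n+2)}-v^{(n+1)})\|_{L^2(0,T;H^1(I))}^2\leq CTP\bigl(\|\rho_0(v^{(n+1)}-v^{(n)})\|_{L^2(0,T;H^1(I))}^2\bigr),
\end{equation*}
combined with a further reduction of $T$ if necessary, shows that $\{\rho_0 v^{(n)}\}$ is Cauchy in $L^2(0,T;H^1(I))$, hence $v^{(n)}$ converges strongly in $L^2(0,T;L^2(I))$. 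This strong convergence lets me pass to the limit in the nonlinear right-hand side $(\rho_0^2/(\overline{\eta}^{(n)})'^2)'$ of (\ref{linear}), while the weak $\mathcal{X}_T$-convergence handles the linear terms; the limit $v_\kappa$ therefore solves (\ref{tytyt})--(\ref{appro}).

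Finally, uniqueness of $v_\kappa$ within $\mathcal{X}_T$ follows from the same contraction estimate: if $v_\kappa$ and $w_\kappa$ are two $\mathcal{X}_T$ solutions with the same data, then $\rho_0(v_\kappa-w_\kappa)$ satisfies (\ref{v1v2}) with itself on the right-hand side, and the energy estimate plus Gronwall forces $\rho_0(v_\kappa-w_\kappa)\equiv 0$ on $[0,T]$, hence $v_\kappa=w_\kappa$ a.e. The main obstacle in this whole argument is, as usual for degenerate parabolic problems with vacuum, the control of the $\kappa\rho_0''/\rho_0$ coefficient and of $Y_t/\rho_0$ near the boundary $\Gamma$; those are absorbed by the Hardy-type Lemma~\ref{hardy} together with the Poincar\'e inequality on $I$, which is why all the higher time-differentiated estimates close. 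Everything else is bookkeeping of initial data compatibility via (\ref{vt}) and careful application of (\ref{Gestim}).
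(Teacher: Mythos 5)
Your proposal is correct and follows essentially the same route as the paper: the linear solve via the Galerkin scheme for the sixth time-differentiated problem, time integration back to $X$, the Hardy-type regularity bootstrap, the self-mapping of $\Theta$ on $\mathcal{C}_T(M)$ via (\ref{kkkkkkkk}) and (\ref{Time}), and the contraction lemma giving strong convergence of $\rho_0 v^{(n)}$ and hence the fixed point and uniqueness. No substantive differences from the paper's argument.
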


\section{Asymptotic estimates for $v_{\kappa}$ independent of $\kappa$}
\label{ee}
Our objective in this section is to show that the higher-order energy function E defined in (\ref{energy}) satisfies the inequality
\begin{equation}
  \sup_{t \in [0,T]} E(t) \leq M_0 +CTP(\sup_{t \in [0,T]} E(t))
  \label{Ein}
\end{equation}
where $P$ denotes a polynomial function, and for $T >0$ taken sufficiently small, with $M_0$ being a constant depending only on the initial data. The norms in $E$ are for solutions $v_{\kappa}$ to our degenerate parabolic $\kappa$-problem (\ref{appro}).

According to (\ref{solution}), $v_{\kappa} \in \mathcal{X}_{T_{\kappa}}$ with the additional bound $||\partial_t^4 v_{\kappa}||_{L^2(0,T_{\kappa})} < \infty$. As such, the energy function $E$ is continuous with respect to $t$, and the inequality (\ref{Ein}) would thus establish a time interval of existence and bound which are both independent of $\kappa$. For the sake of notational convenience, we shall denote $v_{\kappa}$ by $v$. We will generally follow the computation in [\cite{DS_2009}, sec.6].

\subsection{A $\kappa$-independent energy estimate on the $\partial_t^5$-problem}
Our starting point shall be the fifth time differentiated problem of (\ref{tytyt}) for which we have, by naturally using $\partial_t^5 v \in L^2(0,T_{\kappa};H^1(I))$ (since $v \in \mathcal{X}_{T_{\kappa}}$) as a test function, the following identity:
\begin{equation}
  \underbrace{\dfrac{1}{2}\dfrac{d}{dt}\int_I \rho_0|\partial_t^5 v|^2}_{\mathcal{I}_1} - \underbrace{\int_I \partial_t^5[\dfrac{\rho_0^2}{\eta'^2}]\partial_t^5 v'}_{\mathcal{I}_2} + \underbrace{\kappa \int_I \rho_0^2 (\partial_t^5 v')^2}_{\mathcal{I}_3} =0.
  \label{energy_1}
\end{equation}
Noticing the fact that $\partial_t^6 v \in L^2(0,T_{\kappa};L^2(I))$, which follows from (\ref{Ydefi}), (\ref{Y_H}) and high-order Hardy-type inequality, (\ref{energy_1}) is well-defined.
Upon integration in time, both the terms $\mathcal{I}_1$ and $\mathcal{I}_3$ provide sign-definite energy contributions, so we focus our attention on the nonlinear estimates required of the term $\mathcal{I}_2$.

We see that 
\begin{equation}
  \begin{split}
    -\mathcal{I}_2 =& 2 \int_I \partial_t^4 v' [\dfrac{\rho_0^2}{\eta'^3}]\partial_t^5 v'- \sum_{\alpha=1}^4 b_{\alpha}\int_I \partial_t^{\alpha}\dfrac{1}{\eta'^3}\partial_t^{4-\alpha}v'\rho_0^2\partial_t^5 v'\\
    =&\dfrac{d}{dt}\int_I(\partial_t^4 v')^2\dfrac{\rho_0^2}{\eta'^3} + 3\int_I(\partial_t^4 v')^2 v' \dfrac{\rho_0^2}{\eta'^4} - \sum_{\alpha=1}^4 b_{\alpha}\int_I \partial_t^{\alpha}\dfrac{1}{\eta'^3}\partial_t^{4-\alpha}v'\rho_0^2\partial_t^5 v'.
\end{split}
  \label{}
\end{equation}

Hence integrating (\ref{energy_1}) from $0$ to $t \in [0, T_{\kappa}]$, we find that
\begin{equation}
  \begin{split}
  &\dfrac{1}{2}\int_I \rho_0\partial_t^5 v^2(t) + \int_I(\partial_t^4 v')^2\dfrac{\rho_0^2}{\eta'^3}(t) + \kappa\int_0^t\int_I \rho_0^2 (\partial_t^5 v')^2\\
  =& \dfrac{1}{2}\int_I \rho_0\partial_t^5 v^2(0) + \int_I(\partial_t^4 v')^2\dfrac{\rho_0^2}{\eta'^3}(0) - 3\int_0^t\int_I(\partial_t^4 v')^2 v' \dfrac{\rho_0^2}{\eta'^4}\\& + \sum_{\alpha=1}^4 b_{\alpha}\int_I \partial_t^{\alpha}\dfrac{1}{\eta'^3}\partial_t^{4-\alpha}v'\rho_0^2\partial_t^5 v'.
\end{split}
  \label{error_term}
\end{equation}
We next show that all of the error terms can be bounded by $CtP(\sup_{[0,t]}E)$. First, it is clear that
\begin{equation}
  \begin{split}
  -3\int_0^t\int_I(\partial_t^4 v')^2 v' \dfrac{\rho_0^2}{\eta'^4} &\leq C \int_0^t ||v'||_{L^{\infty}} ||\rho_0 \partial_t^4 v'||_{L^2}^2
  \\&\leq C \int_0^t ||v||_{H^2} (||\rho_0 \partial_t^4 v||_{H^1}^2+||\partial_t^4v||_{L^2}^2)
  \\&\leq CtP(\sup_{[0,t]}E).
  \end{split}
  \label{}
\end{equation}

Then using integration-by-parts in time, we have that
\begin{equation}
\begin{split}
&\ \   \int_0^t\int_I\sum_{\alpha=1}^4 b_{\alpha} \partial_t^{\alpha}\dfrac{1}{\eta'^3}\partial_t^{4-\alpha}v'\rho_0^2\partial_t^5 v' \\&= \underbrace{\int_0^t\int_I(\sum_{\alpha=1}^4 b_{\alpha} \partial_t^{\alpha}\dfrac{1}{\eta'^3}\partial_t^{4-\alpha}v')_t\rho_0^2\partial_t^4 v'}_{J}+\int_I\sum_{\alpha=1}^4 b_{\alpha} \partial_t^{\alpha}\dfrac{1}{\eta'^3}\partial_t^{4-\alpha}v'\rho_0^2\partial_t^4 v'\bigg \arrowvert_0^t.
\end{split}
  \label{error_2}
\end{equation}

The term J can be written under the form of the sum of space-time integrals of the following types:
\begin{equation}
  \begin{split}
    J_1&=\int_0^t\int_I\rho_0\partial_t^4 v' v' R(\eta')\rho_0\partial_t^4 v',\\
    J_2&=\int_0^t\int_I\rho_0\partial_t^3 v' (v')^2 R(\eta')\rho_0\partial_t^4 v',\\
J_3&=\int_0^t\int_I\rho_0\partial_t^3 v' \partial_t v' R(\eta')\rho_0\partial_t^4 v',\\
    J_4&=\int_0^t\int_I\rho_0\partial_t^2 v' \partial_t v' v' R(\eta')\rho_0\partial_t^4 v',\\
J_5&=\int_0^t\int_I\rho_0(\partial_t^2 v')^2 R(\eta')\rho_0\partial_t^4 v',\\
J_6&=\int_0^t\int_I\rho_0\partial_t^2 v'(v')^3 R(\eta')\rho_0\partial_t^4 v',\\
    J_7&=\int_0^t\int_I\rho_0(\partial_t v')^3 R(\eta')\rho_0\partial_t^4 v',\\
J_8&=\int_0^t\int_I\rho_0(\partial_t v')^2 (v')^2 R(\eta')\rho_0\partial_t^4 v'.\\
  \end{split}
  \label{}
\end{equation}
where $R(\eta')$ denotes a rational function of $\eta'$.

We first immediately see that
\begin{equation}
  |J_1| \leq C \int_0^t ||v'||_{L^{\infty}} ||\rho_0 \partial_t^4 v'||_{L^2}^2 ||R(\eta')||_{L^{\infty}}  \leq CtP(\sup_{[0,t]}E)  \label{i1}.
\end{equation}

Next, we have that
\begin{equation}
  \begin{split}
    |J_3| &\leq C \int_0^t ||\rho_0\partial_t^3 v'||_{L^4}||\partial_t v'||_{L^{4}}||R(\eta')||_{L^{\infty}}||\rho_0\partial_t^4 v'||_{L^2}\\
  &\leq CtP(\sup_{[0,t]}E),
  \end{split}
  \label{i2}
\end{equation}
and
\begin{equation}
  \begin{split}
    |J_7| &\leq C \int_0^t ||\partial_t v'||_{L^6}^3||R(\eta')||_{L^{\infty}}||\rho_0\partial_t^4 v'||_{L^2}\\
  &\leq CtP(\sup_{[0,t]}E),
  \end{split}
  \label{i3}
\end{equation}
where we used Sobolev embedding inequalities in 1-D, $||\cdot||_{L^{\infty}} \leq C_p ||\cdot||_{H^{1}}$  and $||\cdot||_{L^{p}} \leq C_p ||\cdot||_{H^{\frac{1}{2}}}$, for all $1< p < \infty$.

$J_2$, $J_4$, $J_5$, $J_6$ and $J_8$ can be estimated almost in the same way.

The term $\int_I b_{\alpha} \partial_t^{\alpha}\dfrac{1}{\eta'^3}\partial_t^{4-\alpha}v'\rho_0^2\partial_t^4 v'\bigg \arrowvert_0^t$
can be estimated by 
\begin{equation}E^{\frac{1}{2}}(M_0+CtP(\sup_{t\in[0,T]}E))\end{equation} in the similar way by using the fundamental theorem of calculus.

Therefore, using Young's inequality, we have 
\begin{equation}
  \dfrac{1}{2}\int_I \rho_0\partial_t^5 v^2(t) + \int_I(\partial_t^4 v')^2\dfrac{\rho_0^2}{\eta'^3}(t) 
+ \kappa\int_0^t\int_I \rho_0^2 (\partial_t^5 v')^2 \leq M_0 +CtP(\sup_{[0,t]}E),
  \label{}
\end{equation}
and thus, using the fundamental theorem of calculus,
\begin{equation}
  \begin{split}
 &\ \ \ \ \dfrac{1}{2}\int_I \rho_0\partial_t^5 v^2(t) + \int_I(\rho_0\partial_t^4 v')^2(t)
+ \kappa\int_0^t\int_I \rho_0^2 (\partial_t^5 v')^2\\&\leq M_0 + CtP(\sup_{[0,t]}E).
  \end{split}
  \label{energyest}
\end{equation}

\subsection{Elliptic and Hardy-type estimates for $\partial_t^2 v(t)$ and $v(t)$}
Having obtained the energy estimate (\ref{energyest}) for the $\partial_t^5$-problem, we can begin our bootstrapping argument. We now consider the $\dfrac{1}{\rho_0}\partial_t^3$-problem of (\ref{tytyt})
\begin{equation}
  \dfrac{1}{\rho_0}[\partial_t^3\dfrac{\rho_0^2}{\eta'^2}]' -\dfrac{\kappa}{\rho_0}[\rho_0^2\partial_t^3 v']' = -\partial_t^4v,
  \label{}
\end{equation}
which can be written as 
\begin{equation}
  -\dfrac{2}{\rho_0}[\dfrac{\rho_0^2\partial_t^2v'}{\eta'^3}]' - \dfrac{\kappa}{\rho_0}[\rho_0^2\partial_t^3 v']' = -\partial_t^4v + \dfrac{c_1}{\rho_0}[\dfrac{\rho_0^2\partial_t v'v'}{\eta'^4}]'+\dfrac{c_2}{\rho_0}[\dfrac{\rho_0^2v'^3}{\eta'^5}]',
  \label{}
\end{equation}
and finally be rewritten as the following identity:
\begin{equation}
  \begin{split}
  -\dfrac{2}{\rho_0}[\rho_0^2\partial_t^2v']' - \dfrac{\kappa}{\rho_0}[\rho_0^2\partial_t^3 v']' = &-\rho_0\partial_t^4v + \dfrac{c_1}{\rho_0}[\dfrac{\rho_0^2\partial_t v'v'}{\eta'^4}]'+\dfrac{c_2}{\rho_0}[\dfrac{\rho_0^2v'^3}{\eta'^5}]'\\&-2\dfrac{1}{\rho_0}[\rho_0^2\partial_t^2v']'(1-\dfrac{1}{\eta'^3})-6\rho_0\partial_t^2v'\dfrac{\eta''}{\eta'^4}.
\end{split}
  \label{aaa}
\end{equation}
Here, $c_1$ and $c_2$ are constants whose exact values are not important.

Therefore, using Lemma (\ref{para_lema}) and the fundamental theorem of calculus for the fourth term on the right-hand side of (\ref{aaa}), we obtain that for any $t \in[0,T_{\kappa}]$:
\begin{equation}
  \begin{split}
 &   \sup_{[0,t]}||\dfrac{2}{\rho_0}[\rho_0^2\partial_t^2v']'||_{L^2} \\ \leq &\sup_{[0,t]}||\partial_t^4v||_{L^2} + \sup_{[0,t]}||\dfrac{c_1}{\rho_0}[\dfrac{\rho_0^2\partial_t v'v'}{\eta'^4}]'||_{L^2}\\&+\sup_{[0,t]}||\dfrac{c_2}{\rho_0}[\dfrac{\rho_0^2v'^3}{\eta'^5}]'||_{L^2}+\sup_{[0,t]}||\dfrac{2}{\rho_0}[\rho_0^2\partial_t^2v']'||_{L^2}||3\int_0^{.}\dfrac{v'}{\eta'^4}||_{L^{\infty}}\\&+6\sup_{[0,t]}||\rho_0\partial_t^2v'\dfrac{\eta''}{\eta'^4}||_{L^2}.
\end{split}
  \label{bbb}
\end{equation}
We next estimate each term on the right hand side of (\ref{bbb}). For the first term, we will use our estimate (\ref{energyest}) from which we infer for each $t \in [0,T_{\kappa}]$:
\begin{equation}
  \int_I \rho_0^2[|\partial_t^4 v|^2+|\partial_t^4 v'|^2](t) \leq M_0 +CtP(\sup_{[0,t]}E).
  \label{ccc}
\end{equation}
Note that the first term of left-hand side of (\ref{ccc}) comes from the first term of (\ref{energyest}), together with the fact that $\partial_t^4 v(t,x) = \partial_t^4v(x,0)+\int_0^t \partial_t^5v(.,x)$. Therefore, the Sobolev weighted embedding estimate (\ref{emb}) provides us with the following estimate:
\begin{equation}
  \int_I|\partial_t^4 v|^2(t) \leq M_0+CtP(\sup_{[0,t]}E).
  \label{uu}
\end{equation}

The remaining terms will be estimated by simply using the definition of the energy function $E$. 

For the second term, we have  that:
\begin{equation}
  \begin{split}
&\ \ \ \    ||\dfrac{1}{\rho_0}[\dfrac{\rho_0^2\partial_tv'v'}{\eta'^4}]'||_{L^2} \\&\leq ||(\rho_0\partial_tv')'||_{L^2}||\dfrac{v'}{\eta'^4}||_{L^{\infty}} + ||\partial_tv'[\dfrac{\rho_0v'}{\eta'^4}]'||_{L^2}\\
    &\leq C||(\rho_0v_t')'||_{L^2}||v'||_{L^{\infty}}+||v_t'[\dfrac{\rho_0'v'}{\eta'^4}]||_{L^2} \\&\ \ + ||v_t'[\dfrac{\rho_0v''}{\eta'^4}]||_{L^2}+4||v_t'[\dfrac{\rho_0v'\eta''}{\eta'^5}]||_{L^2}\\
    &\leq C||(\rho_0u_1')'+\int_0^.(\rho_0v_{tt}')'||_{L^2}||v'||_{H^{1}}^{\frac{1}{2}}||u_0'+\int_0^.v_t'||_{H^{\frac{1}{2}}}^{\frac{1}{2}} \\
    &\ \ \ \ +C||u_1'+\int_0^.v_{tt}'||_{L^2}||v'||_{H^{1}}^{\frac{1}{2}}||u_0'+\int_0^.v_t'||_{H^{\frac{1}{2}}}^{\frac{1}{2}}\\&\ \ \ \ +C||u_1'+\int_0^.v_{tt}'||_{L^2}||\rho_0v''||_{H^{1}}^{\frac{3}{4}}||\sqrt{\rho_0}u_0'' + \int_0^.\sqrt{\rho_0}v_t''||_{L^2}^{\frac{1}{4}}\\&\ \ \ \ +C||v'||_{H^{1}}^{\frac{1}{2}}||u_0'+\int_0^.v_t'||_{H^{\frac{1}{2}}}^{\frac{1}{2}}||\int_0^.v''||_{L^2}||\rho_0u_1'+\int_0^.\rho_0v_{tt}'||_{H^{1}}\\
     &\leq C \sup_{[0,t]}E^{\frac{3}{8}}(M_0+tP(\sup_{[0,t]}E)).
\end{split}
\end{equation}

Using the definition of $E$, then for any $t \in [0,T_{\kappa}]$, we have
\begin{equation}
  \sup_{[0,t]}||\dfrac{1}{\rho_0}[\dfrac{\rho_0^2\partial_tv'v'}{\eta'^4}]'||_{L^2} \leq C \sup_{[0,t]}E^{\frac{3}{8}}(M_0+tP(\sup_{[0,t]}E)).
  \label{b1}
\end{equation}
For the third term, we see that
\begin{equation}
  \begin{split}
&\ \ \ \    ||\dfrac{1}{\rho_0}[\dfrac{\rho_0^2v'^3}{\eta'^5}]'||_{L^2} \\&\leq 2||\dfrac{\rho_0'v'^3}{\eta'^5}||_{L^2} +3||v'[\dfrac{\rho_0v''v'}{\eta'^5}]||_{L^2}+5||v'[\dfrac{\rho_0v'^2\eta''}{\eta'^6}]||_{L^2}\\
    &\leq C ||v'||_{H^{1}}^{\frac{1}{2}}||u_0'+\int_0^.v_t'||_{H^{\frac{1}{2}}}^{\frac{1}{2}}||u_0'+\int_0^.v_t'||_{H^{\frac{1}{2}}}^2\\&\ \ \ \ +C||\rho_0v''||_{H^{1}}^{\frac{3}{4}}||\sqrt{\rho_0}u_0'' + \int_0^.\sqrt{\rho_0}v_t''||_{L^2}^{\frac{1}{4}}||u_0'+\int_0^.v_t'||_{H^{\frac{1}{2}}}^2\\
    &\ \ \ \ +C||v'||_{H^{1}}^{\frac{1}{2}}||u_0'+\int_0^.v_t'||_{H^{\frac{1}{2}}}^{\frac{1}{2}}||u_0'+\int_0^.v_t'||_{H^{\frac{1}{2}}}^2||\int_0^.\rho_0v''||_{H^{1}},
  \end{split}
  \label{b2}
\end{equation}
where we used the fact that $||\cdot||_{L^4} \leq C_p||\cdot||_{H^{\frac{1}{2}}}$. Again, using the definition of $E$, the previous inequality provides us for any $t \in [0,T_{\kappa}]$ with
\begin{equation}
  \sup_{[0,t]}||\dfrac{1}{\rho_0}[\dfrac{\rho_0^2v'^3}{\eta'^5}]'||_{L^2} \leq C \sup_{[0,t]}E^{\frac{3}{8}}(M_0+tP(\sup_{[0,t]}E)).
  \label{}
\end{equation}
For the fourth term, we see that
\begin{equation}
  \begin{split}
  ||\dfrac{2}{\rho_0}[\rho_0^2\partial_t^2v']'||_{L^2}||3\int_0^.\dfrac{v'}{\eta'^4}||_{L^{\infty}}(t) &\leq C[||\rho_0\partial_t^2v''||_{L^2}+||\partial_tv'||_{L^2}]t\sup_{[0,t]}||v||_{H^2}\\
  &\leq CtP(\sup_{[0,t]}E).
\end{split}
  \label{}
\end{equation}
Similarly, the fifth term yields the following estimate:
\begin{equation}
  \begin{split}
    ||\rho_0\partial_t^2v'\dfrac{\eta''}{\eta'^4}||_{L^2}(t) &\leq C||\rho_0\partial_t^2v'||_{L^{\infty}}||\eta''||_{L^2}\\
    &\leq C||\rho_0\partial_t^2v'||_{H^{1}}||\int_0^. v''||_{L^2}\\
    &\leq CtP(\sup_{[0,t]}E).
  \end{split}
  \label{b4}
\end{equation}
Combining the estimates (\ref{uu})-(\ref{b4}), we obtain the inequality
\begin{equation}
  \sup_{[0,t]}||\dfrac{2}{\rho_0}[\rho_0^2\partial_t^2v']'||_{L^2} \leq M_0+ CtP(\sup_{[0,t]}E) +C \sup_{[0,t]}E^{\frac{3}{8}}(M_0+tP(\sup_{[0,t]}E)).
  \label{gg}
\end{equation}
We recall that the solution $v$ to our parabolic $\kappa$-problem is in $\mathcal{X}_{T_{\kappa}}$, so for any $t \in [0,T_{\kappa}], \partial_t^2v \in H^2(I)$. Notice that
\begin{equation}
  \dfrac{1}{\rho_0}[\rho_0^2\partial_t^2v']' = \rho_0\partial_t^2v''+2\rho_0'\partial_t^2v'
  \label{}
\end{equation}
so (\ref{gg}) is equivalent to
\begin{equation}
  \sup_{[0,t]}||\rho_0\partial_t^2v''+2\rho_0'\partial_t^2v'||_{L^2} \leq CtP(\sup_{[0,t]}E) +C \sup_{[0,t]}E^{\frac{3}{8}}(M_0+tP(\sup_{[0,t]}E))
  \label{gggg}
\end{equation}
From this inequality, we would like to conclude that both $||\partial_t^2v'||_{L^2}$ and $||\rho_0\partial_t^2v''||_{L^2}$ are bounded by the right-hand side of (\ref{gggg}); the regularity provided by solutions of the $\kappa$-problem allow us to arrive at this conclusion.

By expanding the left-hand side of (\ref{gggg}), we see that
\begin{equation}
  ||\rho_0\partial_t^2v''+2\rho_0'\partial_t^2v'||_{L^2}^2=||\rho_0\partial_t^2v''||_{L^2}^2+4||\rho_0'\partial_t^2v'||_{L^2}^2 + 4\int_I\rho_0\partial_t^2v''\rho_0'\partial_t^2v'.
  \label{qw}
\end{equation}
We notice that the cross-term (\ref{qw}) is an exact derivative with the regularity of $\partial_t^2v$ provide by our $\kappa$-problem,
\begin{equation}
  4\int_I\rho_0\partial_t^2v''\rho_0'\partial_t^2v' = 2 \int_I \rho_0\rho_0'\dfrac{\partial}{\partial_x}|\partial_t^2v'|^2.
  \label{}
\end{equation}
So that by integrating-by-parts, we find that
\begin{equation}
  4\int_I\rho_0\partial_t^2v''\rho_0'\partial_t^2v' = -2||\rho_0'\partial_t^2v'||_{L^2}^2-\int_I \rho_0\partial_t^2v'\rho_0''\partial_t^2v',
  \label{}
\end{equation}
and hence (\ref{qw}) becomes
\begin{equation}
   ||\rho_0\partial_t^2v''+2\rho_0'\partial_t^2v'||_{L^2}^2= ||\rho_0\partial_t^2v''||_{L^2}^2+2||\rho_0'\partial_t^2v'||_{L^2}^2-\int_I \rho_0\partial_t^2v'\rho_0''\partial_t^2v'.
  \label{eeeeee}
\end{equation}
Since the energy function $E$ contains $||\sqrt{\rho_0}\partial_t^3v(t)'||_{L^2(I)}$, the fundamental theorem of calculus show that
\begin{equation}
  \int_I \rho_0\partial_t^2v'\rho_0''\partial_t^2v' \leq C||\sqrt{\rho_0}u_2'+\int_0^{\cdot}\sqrt{\rho_0}\partial_t^3 v'||_{L^2}^2 \leq M_0+ CtP(\sup_{[0,t]}E). 
  \label{}
\end{equation}
Combing this inequality with (\ref{eeeeee}) and (\ref{gg}), yields
\begin{equation}
\begin{split}
  &\ \ \ \ \sup_{[0,t]}[||\rho_0\partial_t^2v''||_{L^2}^2+||\rho_0'\partial_t^2v'||_{L^2}^2] \\&\leq  M_0+CtP(\sup_{[0,t]}E) +C \sup_{[0,t]}E^{\frac{3}{4}}(M_0+tP(\sup_{[0,t]}E)).
\end{split}
  \label{}
\end{equation}
and thus 
\begin{equation}
\begin{split}
  &\ \ \ \ \sup_{[0,t]}[||\rho_0\partial_t^2v''||_{L^2}^2+||\rho_0'\partial_t^2v'||_{L^2}^2+||\rho_0\partial_t^2v'||_{L^2}^2] \\&\leq  M_0+CtP(\sup_{[0,t]}E) +C \sup_{[0,t]}E^{\frac{3}{4}}(M_0+tP(\sup_{[0,t]}E)).
\end{split}  \label{}
\end{equation}
and hence with the physical vacuum conditions of $\rho_0$ given by (\ref{cond_2}) and (\ref{cond_3}), we have that
\begin{equation}
\begin{split}
&\ \ \ \   \sup_{[0,t]}[||\rho_0\partial_t^2v''||_{L^2}^2+||\partial_t^2v'||_{L^2}^2] \\&\leq  M_0+CtP(\sup_{[0,t]}E) +C \sup_{[0,t]}E^{\frac{3}{4}}(M_0+tP(\sup_{[0,t]}E)).
\end{split}
  \label{uuuuu}
\end{equation}
which, together with (\ref{uu}), provide us with the estimate 
\begin{equation}
\begin{split}
&\ \ \ \   \sup_{[0,t]}[||\rho_0\partial_t^2v''||_{L^2}^2+||\partial_t^2v||_{H^{1}}^2] \\&\leq  M_0+CtP(\sup_{[0,t]}E) +C \sup_{[0,t]}E^{\frac{3}{4}}(M_0+tP(\sup_{[0,t]}E)).
\end{split}
  \label{est_for_vtt}
\end{equation}

By studying the $\partial_x(\dfrac{1}{\rho_0}\partial_t)$-problem of (\ref{tytyt}) in the same manner, we find that
\begin{equation}
\begin{split}  
&\ \ \ \ \sup_{[0,t]}[||\rho_0v'''||_{L^2}^2+||v||_{H^2}^2]\\ &\leq M_0+CtP(\sup_{[0,t]}E)+C \sup_{[0,t]}E^{\frac{3}{4}}(M_0+tP(\sup_{[0,t]}E)).
\end{split}
  \label{last0}
\end{equation}
\subsection{Elliptic and Hardy-type estimates for $\partial_t^3 v(t)$ and $\partial_t v(t)$}
We consider the $\dfrac{1}{\sqrt{\rho_0}}\partial_t^4$-problem of (\ref{appro}):
\begin{equation}
  \dfrac{1}{\sqrt{\rho_0}}[\partial_t^4\dfrac{\rho_0^2}{\eta'^2}]'-\dfrac{\kappa}{\sqrt{\rho_0}}[\rho_0^2\partial_t^4v']'=-\sqrt{\rho_0}\partial_t^5v,
  \label{}
\end{equation}
By employing the fundamental theorem of calculus, it can be rewritten as
\begin{equation}
  \begin{split}
  &-\dfrac{2}{\sqrt{\rho_0}}[\rho_0^2\partial_t^3v']'-\dfrac{\kappa}{\sqrt{\rho_0}}[\rho_0^2\partial_t^4v']'\\=&-\sqrt{\rho_0}\partial_t^5v+\dfrac{c_1}{\sqrt{\rho_0}}[\dfrac{\rho_0^2\partial_t^2v'v'}{\eta'^4}]'+\dfrac{c_2}{\sqrt{\rho_0}}[\dfrac{\rho_0^2\partial_tv'^2}{\eta'^5}]'\\
  &-\dfrac{2}{\sqrt{\rho_0}}[\rho_0^2\partial_t^3v']'(1-\dfrac{1}{\eta'^3})-6\rho_0^{\frac{3}{2}}\partial_t^3v'\dfrac{\eta''}{\eta'^4},
  \end{split}
  \label{}
\end{equation}
for some constants $c_1$ and $c_2$.

For any $t \in [0,T_{\kappa}]$, Lemma (\ref{para_lema}) provides the $\kappa$-independent estimate
\begin{equation}
  \begin{split}
 &\sup_{[0,t]}||\dfrac{2}{\sqrt{\rho_0}}[\rho_0^2\partial_t^3v']'||_{L^2} \\ \leq &\sup_{[0,t]}||\sqrt{\rho_0}\partial_t^5v||_{L^2}+\sup_{[0,t]}||\dfrac{c_1}{\sqrt{\rho_0}}[\dfrac{\rho_0^2\partial_t^2v'v'}{\eta'^4}]'||_{L^2}\\&+\sup_{[0,t]}||\dfrac{c_2}{\sqrt{\rho_0}}[\dfrac{\rho_0^2\partial_tv'^2}{\eta'^5}]'||_{L^2}+\sup_{[0,t]}||\dfrac{2}{\sqrt{\rho_0}}[\rho_0^2\partial_t^3v']'||_{L^2}||3\int_0^.\dfrac{v'}{\eta'^4}||_{L^{\infty}}\\&+6\sup_{[0,t]}||\rho_0^{\frac{3}{2}}\partial_t^3v'\dfrac{\eta''}{\eta'^4}||_{L^2}.
  \end{split}
  \label{ss}
\end{equation}
We estimate each term on the right hand side of (\ref{ss}).

The first term on the right-hand side is bounded by $M_0+CtP(\sup_{[0,t]}E)$ due to (\ref{energyest}).

For the second term, we have that
\begin{equation}
  \begin{split}
  &\ \ \ \   ||\dfrac{1}{\sqrt{\rho_0}}[\dfrac{\rho_0^2\partial_t^2v'v'}{\eta'^4}]'||_{L^2} \\&\leq ||\dfrac{\sqrt{\rho_0}\partial_t^2v'(\rho_0v')'}{\eta'^4}||_{L^2}+||\dfrac{\sqrt{\rho_0}v'(\rho_0\partial_t^2v')'}{\eta'^4}||_{L^2}+4||\dfrac{\sqrt{\rho_0}\partial_t^2v'\rho_0v'\eta''}{\eta'^5}]||_{L^2}\\
    &\leq C||\sqrt{\rho_0}u_2'+\int_0^.\sqrt{\rho_0}\partial_t^3v'||_{L^2}||(\rho_0v')'||_{L^{\infty}}\\
&\ \ \ \ +C||v'||_{H^{1}}^{\frac{1}{2}}||u_0'+\int_0^.v_t'||_{H^{\frac{1}{2}}}^{\frac{1}{2}}||\sqrt{\rho_0}(\rho_0\partial_t^2v')'||_{L^{2}}\\
        &\ \ \ \ +C||v'||_{H^{1}}^{\frac{1}{2}}||u_0'+\int_0^.v_t'||_{H^{\frac{1}{2}}}^{\frac{1}{2}}||\int_0^.v''||_{L^2}||\rho_0^{\frac{3}{2}}\partial_t^2v'||_{L^{\infty}}\\
    &\leq C||\sqrt{\rho_0}u_2'+\int_0^.\sqrt{\rho_0}\partial_t^3v'||_{L^2}||v'||_{H^{1}}^{\frac{1}{2}}||u_0'+\int_0^.v_t'||_{H^{\frac{1}{2}}}^{\frac{1}{2}}\\
    &\ \ \ \ +C||\sqrt{\rho_0}u_2'+\int_0^.\sqrt{\rho_0}\partial_t^3v'||_{L^2}||\rho_0v''||_{H^{1}}^{\frac{3}{4}}||\sqrt{\rho_0}u_0'+\int_0^.\sqrt{\rho_0}v_t''||_{L^2}^{\frac{1}{4}}    \\
&\ \ \ \ +C||v'||_{H^{1}}^{\frac{1}{2}}||u_0'+\int_0^.v_t'||_{H^{\frac{1}{2}}}^{\frac{1}{2}}||\sqrt{\rho_0}u_2'+\int_0^.\sqrt{\rho_0}\partial_t^3v'||_{L^2}\\
&\ \ \ \ +C||v'||_{H^{1}}^{\frac{1}{2}}||u_0'+\int_0^.v_t'||_{H^{\frac{1}{2}}}^{\frac{1}{2}}||\rho_0\sqrt{\rho_0}u_2'+\int_0^.\rho_0\sqrt{\rho_0}\partial_t^3v''||_{L^2}\\
&\ \ \ \ +C||v'||_{H^{1}}^{\frac{1}{2}}||u_0'+\int_0^.v_t'||_{H^{\frac{1}{2}}}^{\frac{1}{2}}||\int_0^.v''||_{L^2}||\rho_0\partial_t^2v'||_{H^1}.
  \end{split}
  \label{ert}
\end{equation}
where we have again used fact that $||\cdot||_{L^{\infty}}\leq C||\cdot||_{H^{\frac{3}{4}}}$. Using the definition of $E$, it shows that for any $t \in [0,T_{\kappa}]$,
\begin{equation}
  \sup_{[0,t]}||\dfrac{1}{\sqrt{\rho_0}}[\dfrac{\rho_0^2\partial_t^2v'v'}{\eta'^4}]'||_{L^2} \leq C\sup_{[0,t]}E^{\frac{3}{8}}(M_0+CtP(\sup_{[0,t]}E)).
  \label{}
\end{equation}
For the third term on the right-hand side of (\ref{ss}), we have similarly that
\begin{equation}
  \begin{split}
    &\ \ \ \ ||\dfrac{1}{\sqrt{\rho_0}}[\dfrac{\rho_0^2\partial_tv'^2}{\eta'^5}]'||_{L^2}(t)\\ &\leq 2||(\rho_0\partial_tv')'||_{L^2}||\sqrt{\rho_0}\dfrac{\partial_tv'}{\eta'^5}||_{L^{\infty}}+5||\sqrt{\rho_0}\dfrac{\partial_tv'^2}{\eta'^6}||_{L^2}||\rho_0\eta''||_{L^{\infty}}\\
    &\leq C||(\rho_0 u_1')'+\int_0^.(\rho_0\partial_t^2v')'||_{L^2}||\sqrt{\rho_0}\partial_tv'||_{L^2}^{1-\alpha}||(\sqrt{\rho_0}\partial_tv')'||_{L^{2-a}}^{\alpha}\\
    &\ \ \ \ +C||\partial_tv'||_{L^4}^2||\int_0^.(\rho_0v'')'||_{L^2}\\
    &\leq C||(\rho_0 u_1')'+\int_0^.(\rho_0\partial_t^2v')'||_{L^2}||u_1'+\int_0^.\partial_t^2v'||_{L^2}^{1-\alpha}||(\sqrt{\rho_0}\partial_tv')'||_{L^{2-a}}^{\alpha}\\
    &\ \ \ \ +C||\partial_tv'||_{H^{\frac{1}{2}}}^2||\int_0^.(\rho_0v'')'||_{L^2}.
  \end{split}
  \label{hh}
\end{equation}
where $0<a<\frac{1}{2}$ is given and $0 < \alpha=\dfrac{3-3a}{4+3a} <1$.

The only term on the right-hand side of (\ref{hh}) which is not directly contained in the definition of $E$ is $||(\sqrt{\rho_0}\partial_tv')'||_{L^{2-a}}^{\alpha}$. Then we notice that
\begin{equation}
  \begin{split}
    ||(\sqrt{\rho_0}\partial_tv')'||_{L^{2-a}} &\leq ||\dfrac{\partial_tv'}{2\sqrt{\rho_0}}||_{L^{2-a}}+||\sqrt{\rho_0}v_t''||_{L^2}\\
    &\leq ||\dfrac{1}{2\sqrt{\rho_0}}||_{L^{2-\frac{a}{2}}}||\partial_tv'||_{H^{\frac{1}{2}}}+||\sqrt{\rho_0}v_t''||_{L^2}
  \end{split}
  \label{0404}
\end{equation}
where we have used the fact that $||\cdot||_{L^p} \leq C||\cdot||_{H^{\frac{1}{2}}}$, for all $1<p<\infty$. So (\ref{hh}) and (\ref{0404}) provides us for any $t \in [0,T_{\kappa}]$ with
\begin{equation}
  \sup_{[0,t]}||\dfrac{1}{\sqrt{\rho_0}}[\dfrac{\rho_0^2\partial_tv'^2}{\eta'^5}]'||_{L^2} \leq C\sup_{[0,t]}E^{\frac{\alpha}{2}}(M_0+tP(\sup_{[0,t]}E)).
  \label{}
\end{equation}
where $0 < \alpha=\dfrac{3-3a}{4+3a} <1$.

The fourth term on the right-hand side of (\ref{ss}) is easily treated as:
\begin{equation}
  \begin{split}
   &\ \ \ \ ||\dfrac{1}{\sqrt{\rho_0}}[\rho_0^2\partial_t^3v']'||_{L^2}||\int_0^.\dfrac{v'}{\eta'^4}||_{L^{\infty}}(t)\\ &\leq C[||\rho_0^{\frac{3}{2}}\partial_t^3v''||_{L^2}+||\sqrt{\rho_0}\partial_t^3v'||_{L^2}]t\sup_{[0,t]}||v||_{H^2}\\
    &\leq CtP(\sup_{[0,t]}E).
\end{split}
  \label{}
\end{equation}
Similarly, the fifth term is estimated as follows:
\begin{equation}
  \begin{split}
    ||\rho_0^{\frac{3}{2}}\partial_t^3v'\dfrac{\eta''}{\eta'^4}||_{L^2}(t) &\leq C||\sqrt{\rho_0}\partial_t^3v'||_{L^2}||\int_0^t\rho_0v''||_{H^1}\\
    &\leq CtP(\sup_{[0,t]}E).
  \end{split}
  \label{ggggggg}
\end{equation}
Combining the estimates (\ref{ert})-(\ref{ggggggg}), we can show that
\begin{equation}
  \sup_{[0,t]}||\dfrac{1}{\sqrt{\rho_0}}[\rho_0^2\partial_t^3v']'||_{L^2} \leq M_0+CtP(\sup_{[0,t]}E)+C\sup_{[0,t]}E^{\frac{\alpha}{2}}(M_0+tP(\sup_{[0,t]}E)).
  \label{rtu}
\end{equation}
Now, since for any $t \in [0,T_{\kappa}]$, solutions to our parabolic $\kappa$-problem have the regularity $\partial_t^2v \in H^2(I)$, we integrate-by-parts:
\begin{equation}
  \begin{split}
    ||\dfrac{1}{\sqrt{\rho_0}}[\rho_0^2\partial_t^3v']'||_{L^2}^2 &=||\rho_0^{\frac{3}{2}}\partial_t^3v''||_{L^2}^2+4||\sqrt{\rho_0}\rho_0'\partial_t^3v'||_{L^2}^2+2\int_I\rho_0'\rho_0^2[|\partial_t^3v'|^2]'\\ 
    &=||\rho_0^{\frac{3}{2}}\partial_t^3v''||_{L^2}^2-2\int_I\rho_0''\rho_0^2|\partial_t^3v'|^2.
  \end{split}
  \label{zs}
\end{equation}
Combined with (\ref{rtu}), and the fact that $\rho_0\partial_t^3v'=\rho_0u_3'+\int_0^.\rho_0\partial_t^4v'$ for the second term on the right-hand side of (\ref{zs}), we find that
\begin{equation}
  \sup_{[0,t]}||\rho_0^{\frac{3}{2}}\partial_t^3v''||_{L^2}^2 \leq M_0+CtP(\sup_{[0,t]}E)+C\sup_{[0,t]}E^{\alpha}(M_0+tP(\sup_{[0,t]}E)).
  \label{ghjj}
\end{equation}
Now, since 
\begin{equation}
  \dfrac{1}{\sqrt{\rho_0}}[\rho_0^2\partial_t^3v']' = \rho_0^{\frac{3}{2}}\partial_t^3v''+2\sqrt{\rho_0}\rho_0'\partial_t^3v',
  \label{}
\end{equation}
the estimate (\ref{rtu}) and (\ref{ghjj}) also imply that
\begin{equation}
  \sup_{[0,t]}||\sqrt{\rho_0}\rho_0'\partial_t^3v'||_{L^2}^2 \leq M_0+CtP(\sup_{[0,t]}E)+C\sup_{[0,t]}E^{\alpha}(M_0+tP(\sup_{[0,t]}E)).
  \label{}
\end{equation}
Therefore,
\begin{equation}
\begin{split}
 &\ \ \ \ \sup_{[0,t]}[||\rho_0^{\frac{3}{2}}\partial_t^3v''||_{L^2}^2+||\sqrt{\rho_0}\rho_0'\partial_t^3v'||_{L^2}^2+||\rho_0^{\frac{3}{2}}\partial_t^3v'||_{L^2}^2] \\&\leq M_0+CtP(\sup_{[0,t]}E))+C\sup_{[0,t]}E^{\alpha}(M_0+tP(\sup_{[0,t]}E))
  \label{}
\end{split}
\end{equation}
so that with (\ref{cond_2}) and (\ref{cond_3})
\begin{equation}
\begin{split}
&\ \ \ \  \sup_{[0,t]}[||\rho_0^{\frac{3}{2}}\partial_t^3v''||_{L^2}^2+||\sqrt{\rho_0}\partial_t^3v'||_{L^2}^2] \\&\leq M_0+CtP(\sup_{[0,t]}E))+C\sup_{[0,t]}E^{\alpha}(M_0+tP(\sup_{[0,t]}E))
\end{split}
  \label{}
\end{equation}
Together with (\ref{uu}) and the weighted embedding estimate (\ref{emb}), the above inequality shows that
\begin{equation}
\begin{split}
&\ \ \ \   \sup_{[0,t]}[||\rho_0^{\frac{3}{2}}\partial_t^3v''||_{L^2}^2+||\partial_t^3v||_{H^{\frac{1}{2}}}^2] \\ &\leq M_0+CtP(\sup_{[0,t]}E))+C\sup_{[0,t]}E^{\alpha}(M_0+tP(\sup_{[0,t]}E)).
\end{split}
  \label{last1}
\end{equation}

By studying the $\sqrt{\rho_0}\partial_x(\dfrac{1}{\rho_0}\partial_t^2)$-problem of (\ref{tytyt}) in the same manner, we find that
\begin{equation}
\begin{split}
&\ \ \ \   \sup_{[0,t]}[||\rho_0^{\frac{3}{2}}\partial_tv'''||_{L^2}^2+||\partial_tv||_{H^{\frac{3}{2}}}^2] \\&\leq M_0+CtP(\sup_{[0,t]}E))+C\sup_{[0,t]}E^{\alpha}(M_0+tP(\sup_{[0,t]}E)).
\end{split}
  \label{last2}
\end{equation}

\section{Proof of Theorem}\label{s7}
\subsection{Time of existence and bounds independent of $\kappa$ and existence of solutions to (\ref{simsystem})}
Summing the inequality (\ref{energyest}),(\ref{est_for_vtt}),(\ref{last0}),(\ref{last1}),(\ref{last2}), we find that
\begin{equation}
  \sup_{[0,t]}E(t) \leq M_0+CtP(\sup_{[0,t]}E))+C\sup_{[0,t]}E^{\alpha}(M_0+tP(\sup_{[0,t]}E)).
  \label{}
\end{equation}
As $\alpha<1$, by using Young's inequality and readjusting the constants, we obtain
\begin{equation}
  \sup_{[0,t]}E(t) \leq M_0+CtP(\sup_{[0,t]}E).
  \label{}
\end{equation}
This provides us with a time of existence $T_1$ independent of $\kappa$ and an estimate on $(0,T_1)$ independent of $\kappa$ of the type:
\begin{equation}
  \sup_{[0,T_1]}E(t) \leq 2M_0.
  \label{bound}
\end{equation}
In particular, our sequence of solutions $(v_{\kappa})$ satisfy the $\kappa$-independent bound (\ref{bound}) on the $\kappa$-independent time interval $(0,T_1)$.

\subsection{The limit as $\kappa \to 0$}
By the $\kappa$-independent estimate (\ref{bound}), there exists a subsequence of $(v_{\kappa})$ which converges weakly to $v$ in $L^2(0,T;H^2(I))$. With $\eta(x,t)=x+\int_0^tv(x,s)ds$, by standard compactness arguments, we see that a further subsequence of $v_{\kappa}$ and $\eta_{\kappa}'$ uniformly converges to $v$ and $\eta'$, respectively, which shows $v$ is the solution to (\ref{simsystem})--(\ref{90909}) and $v(x,0)=u_0(x)$.

\section{The general case for $1 <\gamma< 3$}
\label{ga3}
If $\gamma \neq 2$, we set $\omega_0=\rho_0^{\gamma-1}$, then physical vacuum condition shows that
\begin{equation}
  \omega_0 \geq C \text{dist}(x, \partial I),
\label{w0000}
\end{equation}
when $x \in I$ near the vacuum boundary $\Gamma$, and
\begin{align}
  \bigg |\dfrac{\partial\omega_0}{\partial x}(x)\bigg |\geq C\ \ \text{when}  \ d(x,\partial I) \leq \alpha,
  \label{cond_2}\\
  \omega_0 \geq C_{\alpha} > 0\ \ \text{when}  \ d(x,\partial I) \geq \alpha.
  \label{cond_3}
\end{align}

Now we can use $\omega_0 v$ as intermediate variable and construct approximate solution to degenerate parabolic regularization just in a similar way to Section \ref{s5}. Noticing that the force term $F=\int_0^x \rho_0 dy +C$ would not be smooth now, we need to require a certain high space regularity for it to keep the method described in Section \ref{s5} still work. Since we would require that $X=\omega_0 v \in H^3(I)$, then from (\ref{531}) and (\ref{initialdata}), we will need the regularity that $\omega_0\int_0^x\rho_0dy \in H^3(I)$. 

With (\ref{w0000}) and $\rho_0=\omega_0^{\frac{1}{\gamma-1}}$, we will just require that $\omega_0^{\frac{1}{\gamma-1}-1} \in L^2(I)$,  which means $1 < \gamma < 3$.

So for $1 < \gamma < 3$, we can get the local wellposedness by doing the similar proof as $\gamma=2$ in Section \ref{s5} -- \ref{s7}. Details can be seen in \cite{DS_2009}.
%
\section*{Acknowledgments}The work was in part supported by NSFC
(grants No. 10801029 and 10911120384), FANEDD, Shanghai Rising
Star Program (10QA1400300), SGST 09DZ2272900 and SRF for ROCS,
SEM.


\begin{thebibliography}{99}
\bibitem{C}
\newblock S. Chandrasekhar,
\newblock \emph{An Introduction to the Study of Stellar Structure}
\newblock Univ. of Chicago Press, 1938.

\bibitem{DS_2009} 
	\newblock D.Coutand, S. Shkoller
	\newblock \emph{Well-posedness in smooth function spaces for the moving-boundary 1-D compressible Euler equations in physical vacuum},
	\newblock Communications on Pure and Applied Mathematics, Vol. 64, No. 3 (2011), 328--366

\bibitem{DS_2010}
	\newblock D. Coutand, S. Shkoller
	\newblock \emph{Well-posedness in smooth function spaces for the moving boundary 3-D compressible Euler equations in physical vacuum}
	\newblock preprint, 2010

\bibitem{DS}
	\newblock D. Coutand, S. Shkoller
	\newblock \emph{Well-posedness of the free-surface incompressible Euler equations with or without surfce tension}
	\newblock J.Amer.Math.Soc., 20(3):829-930, 2007

\bibitem{L_1996}
	\newblock T.P Liu
	\newblock \emph{Compressible Flow with Damping and Vacuum}
	\newblock Japan J. Indust. Apph Math., 13 (1996), 25-32
\bibitem{Sh_2001}
	\newblock S.Engelberg, H.Liu, E.Tadmor
	\newblock \emph{Critical Thresholds in Euler-Poisson Equations},
	\newblock Indiana University Mathematics Journal, Vol. 50, No. 1 (2001)

\bibitem{Kufner_1985}
	\newblock A. Kufner
	\newblock Weighted Sobolev Spaces. 
	\newblock Wiley-Interscience, 1985.

\bibitem{LT_1997}
	\newblock T.P Liu, T.Yang
	\newblock \emph{Compressible Euler equations with vacuum}
	\newblock J. Differential Equations 140 (1997), no. 2, 223-237.

\bibitem{LT_2000}
	\newblock T.P Liu, T.Yang
	\newblock \emph{Compressible flow with vacuum and physical singularity}
	\newblock Methods Appl.Anal. 7 (2000), no. 3, 495-510.

\bibitem{TY_2006}
	\newblock T.Yang
	\newblock \emph{Singular behavior of vacuum states for compressible fluids}
	\newblock Comput. Appl. Math. 190(2006), 211-231.

\bibitem{TY_2002}
	\newblock Y.B Deng, T.P Liu, T. Yang, Z.A Yao
	\newblock \emph{Solutions of Euler-Poisson Equations for Gaseous Stars}
	\newblock Arch. Rational Mech. Anal. 164 (2002) 261-285.


\bibitem{MJ_2009}
	\newblock J. Jang, N. Masmoudi
	\newblock \emph{Well-posedness for compressible Euler equations with physical vacuum singularity}
	\newblock Comm. Pure Appl. Math. 62, 1327-1385 (2009)

\bibitem{MJ_2010}
	\newblock J. Jang, N. Masmoudi
	\newblock \emph{Well-posedness of compressible Euler equations in a physical vacuum}
	\newblock preprint 2010

\bibitem{Wu_1997}
	\newblock S.J.Wu
	\newblock \emph{Well-posedness in Sobolev spaces of the full water wave problem in 2-D}
	\newblock Invent.Math. 130(1997),no.1,39-72

\bibitem{Wu_1999}
	\newblock S.J.Wu
	\newblock \emph{Well-posedness in Sobolev spaces of the full water wave problem in 3-D}
	\newblock J.Amer.Math.Soc. 12(1999),no.2,445-495

\bibitem{Lindblad_2005}
	\newblock H. Lindblad
	\newblock \emph{Well-posedness for the motion of an incompressible liquid with free surface boundary}
	\newblock Ann. of Math. (2) 162 (2005), no.1,109-194

\bibitem{Lind_2005}
	\newblock H. Lindblad
	\newblock \emph{Well-posedness for the motion of a compressible liquid with free surface boundary}
	\newblock Commun.Math. Phys.(2) 260 (2005),319-392

\bibitem{Makino_1986}
	\newblock T.Makino
	\newblock \emph{On a local existence theorem for the evolution equation of gaseous stars}
	\newblock In: T.Nishida , M,Mimura ,H. Fujii (eds) Patterns and waves
	\newblock North-Hooland, Amesterdam,1986

\bibitem{Schweizer_2005} 
	\newblock B.Schweizer
	\newblock \emph{On the three-dimensinal euler equations with a free boundary subject to surface tension}
	\newblock Ann.Inst.H.Poincare Anal. Non Lineaire 22(2005),no.6, 753-781

\bibitem{AM}
	\newblock D.M.Ambrose, N.Masmoudi
	\newblock \emph{Well-posedness of 3D vortex sheets with surface tension}
	\newblock Comm. Math. Sci., 5:391-430, 2007

\bibitem{ZZ}
	\newblock P.Zhang, Z. Zhang
	\newblock \emph{On the free boundary problem of 3-D incompressible Euler equaitons}
	\newblock Comm. Pure Appl. Math, vol.61, no.7,877--940,2008

\bibitem{HL_2010}
	\newblock H.L. Li, A. Matsumura and G. Zhang
	\newblock \emph{Optimal decay rate of the compressible Navier-Stokes-Poisson System in $R^3$}
	\newblock Archive for Rational Mechanics and Analysis,196 (2010), 681-713

\bibitem{HL_2008}
\newblock H.L Li, J. Li, Z.P Xin
\newblock \emph{Vanishing of Vacuum States and Blow-up Phenomena of the Compressible Navier-Stokes Equations}
\newblock Comm. Math. Phys., 281 (2008), 401-444.


\bibitem{XinL}
\newblock T.Luo, Z.Xin, T.Yang
\newblock \emph{Interface behavior of compressible Navier-Stokes equations with vacuum}
\newblock SIAM J.Math.Anal. 31, 1175-1191 (2000)


\bibitem{Xin}
\newblock Z.Xin
\newblock \emph{Blowup of smooth solutions to the compressible Navier-Stokes equation with compact density}
\newblock Comm. Pure Appl.Math. 51(1998), no.3, 229-240

\bibitem{TD}
\newblock T.Zhang, D.Fang
\newblock \emph{Global behavior of spherically symmetric Navier-Stokes-Poisson system with degenerate viscosity coefficients}
\newblock Arch. Ration. Mech. Anal. 191 (2009), no 2, 195-243



\end{thebibliography}
\end{document}